\theoremstyle{plain}
\newtheorem*{cor*}{Corollary}
\newtheorem*{prop*}{Proposition}
\newtheorem*{thm*}{Theorem}
\newtheorem*{notation*}{Notation}
\newtheorem{thm}{Theorem}[section]
\newtheorem{cor}[thm]{Corollary}
\newtheorem{prop}[thm]{Proposition}
\newtheorem{lem}[thm]{Lemma}
\newtheorem{fact}[thm]{Fact}
\newtheorem{claim}[thm]{Claim}
\theoremstyle{definition}
\newtheorem{dfn}[thm]{Definition}
\newtheorem{rem}[thm]{Remark}
\newtheorem{notation}[thm]{Notation}
\newtheorem{ex}[thm]{Examples}
\numberwithin{equation}{subsection}
\newcommand{\forkindep}[1][]{%
  \mathrel{
    \mathop{
      \vcenter{
        \hbox{\oalign{\noalign{\kern-.3ex}\hfil$\vert$\hfil\cr
              \noalign{\kern-.7ex}
              $\smile$\cr\noalign{\kern-.3ex}}}
      }
    }\displaylimits_{#1}
  }
}  
\providecommand{\customgenericname}{}
\newcommand{\newcustomtheorem}[2]{%
  \newenvironment{#1}[1]
  {%
   \renewcommand\customgenericname{#2}%
   \renewcommand\theinnercustomgeneric{##1}%
   \innercustomgeneric
  }
  {\endinnercustomgeneric}
}
\title{Definably compact groups definable in real closed fields.{I}}
\author{Eliana Barriga}
\address{ Eliana Barriga\\ Universidad de los Andes, Colombia\\
University of Haifa, Israel}
\email{el.barriga44@uniandes.edu.co}
\keywords{O-minimality, semialgebraic groups, real closed fields, algebraic groups, locally definable groups}
\subjclass[2010]{03C64; 20G20; 22E15; 03C68; 22B99}
\begin{document}

\begin{abstract}

We study definably compact definably connected groups definable in a sufficiently saturated real closed field $R$. We introduce the notion of group-generic point for $\bigvee$-definable groups and show the existence of group-generic points for definably compact groups definable in a sufficiently saturated o-minimal expansion of a real closed field. We use this notion along with some properties of generic sets to prove that for every definably compact definably connected group $G$ definable in $R$ there are a connected $R$-algebraic group $H$, a definable injective map $\phi$ from a generic definable neighborhood of the identity of $G$ into the group $H\left(R\right)$ of $R$-points of $H$ such that $\phi$ acts as a group homomorphism inside its domain. This result is used in \cite{BADefComp-II} to prove that the o-minimal universal covering group of an abelian connected definably compact group definable in a sufficiently saturated real closed field $R$ is, up to locally definable isomorphisms, an open connected locally definable subgroup of the o-minimal universal covering group of the $R$-points of some $R$-algebraic group.
\end{abstract}

\maketitle

\section{Introduction}

This is the first of two papers around definably compact groups definable in real closed fields.

Definable groups in o-minimal structures have been intensively studied
in the last three decades, and it is a field of current research.
A \textit{real closed field} is an ordered field elementarily equivalent to
the real ordered field $\mathbb{R}$; for instance, $\mathbb{R}$,
the real algebraic numbers $\mathbb{R}_{\textrm{alg}}$, the $\aleph_{1}$-saturated
hyperreal numbers $^{*}R$, which has infinite and infinitesimal elements,
among other examples. By quantifier elimination in real closed fields
(Tarski-Seidenberg), the definable sets in a real closed field $R$
are the \textit{semialgebraic sets} over $R$; namely, sets that are finite
Boolean combination of sets of solutions of finitely many polynomial
equations and inequalities over $R$. Since a real closed field is
an o-minimal structure (i.e., an ordered structure for which every
definable subset of its universe is a finite union of points and intervals,
see e.g., \cite{LVD}), then semialgebraic groups over a real closed
field can be seen as a generalization of the semialgebraic groups
over the real field $\mathbb{R}$, and also as a particular case of
the groups definable in an o-minimal structure.

There is a closed relation between groups definable in
a field $F$ and $F$-algebraic groups. Given an $F$-algebraic group
$H$, the group of $F$-points $H\left(F\right)$ is a definable group
in $F$. When $F$ is an algebraically closed field, every definable
group in $F$ is $F$-definably isomorphic, as an $F$-definable group, to some $F$-algebraic
group (\cite{BousII,LouChunkThm}); this fact is a version of Weil's
theorem that asserts that any $F$-algebraic group can be recovered
from birational data \cite{We}. However, when $F$ is real closed,
there are semialgebraic groups over $F$ that are not $F$-definably
isomorphic to $H\left(F\right)$ for any $F$-algebraic group $H$
(e.g., consider the group $\left(\left[0,1\right)\subseteq F,+_{mod\,1}\right)$).

Hrushovski and Pillay formulated in \cite{HP1,HP2} a relationship between a semialgebraic group $G$ over a real closed field $R$ and the set of $R$-points $H\left(R\right)$ of some $R$-algebraic group $H$. It roughly states that although the group operation of a semialgebraic group is given by a semialgebraic function, it is locally given by a rational function. More specifically, it assures the following.

\begin{fact}\label{F:ThmA}\cite[Thm. A]{HP1,HP2}
Let $G$ be a definably connected semialgebraic group over a real closed field $\mathcal{R}=\left(R,<,+,0,\cdot,1\right)$. Then there are a connected $R$-algebraic group $H$, a semialgebraic neighborhood $U$ of the identity of $G$, and a semialgebraic homeomorphism $f:U\rightarrow f\left(U\right)\subseteq H\left(R\right)$, where $H\left(R\right)$ is the set of $R$-points of $H$, such that $x,y,xy\in U$ implies $f\left(xy\right)=f\left(x\right)f\left(y\right)$.
\end{fact}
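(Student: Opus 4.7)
The plan is to combine Pillay's structure theorem for groups in o-minimal structures (which equips $G$ with a semialgebraic manifold structure) with Weil's classical theorem recovering an algebraic group from a rational group chunk \cite{We}, bridged by the main technical step of showing that the group law of $G$ is locally rational in suitable coordinates near $e$.

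First, I would invoke Pillay's theorem to regard $G$ as a semialgebraic manifold of dimension $d = \dim G$ with continuous semialgebraic multiplication. Pick a semialgebraic chart $\varphi\colon V \to W \subseteq R^d$ on a neighborhood $V$ of $e$ with $\varphi(e) = 0$, and transport the multiplication to a continuous semialgebraic partial operation $\mu\colon W_1 \times W_1 \to W$ on a neighborhood $W_1$ of the origin. The identity conditions $\mu(x, 0) = x$, $\mu(0, y) = y$, and associativity (where all three terms are defined) follow automatically from their counterparts in $G$.

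Second, and this is the heart of the argument, I would upgrade $\mu$ from semialgebraic to rational on a smaller open neighborhood of $(0, 0)$. Let $\Gamma \subseteq \mathbb{A}_R^{3d}$ be the Zariski closure of the graph of $\mu$, an irreducible algebraic variety (after shrinking $W_1$), and let $\pi\colon \Gamma \to \mathbb{A}_R^{2d}$ be the projection onto the first two factors. Cell decomposition in real closed fields shows $\pi$ has finite generic fibres, and the identity axioms for $\mu$ force the differentials of $\mu$ in each variable at the origin to be the identity, so $\pi$ is \'etale at $(0, 0, 0)$. The continuous semialgebraic map $\mu$ singles out a specific analytic branch through the origin; using that all defining data are over $R$ and that the selected branch is rigidly determined by the topological condition of continuity at $e$, one argues that the component of $\Gamma$ through the origin maps birationally onto $\mathbb{A}_R^{2d}$ under $\pi$. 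Hence $\mu$ is given by a rational map over $R$ on a Zariski-open neighborhood of $(0, 0)$.

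Finally, the pair $(W_1, \mu)$ is now a rational group chunk defined over $R$. Applying Weil's group chunk theorem \cite{We} yields a unique connected $R$-algebraic group $H$ together with a birational $R$-morphism from $\mathbb{A}_R^d$ to $H$ which is an isomorphism on a Zariski-open neighborhood of $0$ and which intertwines $\mu$ with the group law of $H$. Taking $R$-points and restricting to a sufficiently small semialgebraic neighborhood $U \subseteq V$ of $e$ produces the desired semialgebraic homeomorphism $f\colon U \to f(U) \subseteq H(R)$ with $f(xy) = f(x) f(y)$ whenever $x, y, xy \in U$. The principal obstacle is the rationalization step: semialgebraic functions are only piecewise Nash rather than piecewise rational, and the local analytic branch produced by the implicit function theorem is not a priori rational. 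The argument must exploit the algebraic rigidity of the group axioms — two-sided identity and associativity — to force the relevant component of the Zariski closure of the graph to be birational over $\mathbb{A}_R^{2d}$, not merely generically finite. A secondary concern is ensuring $H$ is defined over $R$ and not only over $R[\sqrt{-1}]$, which requires tracking that every object in the construction is $R$-definable and, if necessary, descending from the algebraic closure by a Galois-averaging argument.
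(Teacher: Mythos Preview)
The paper does not prove Fact~\ref{F:ThmA}; it is quoted as a result of Hrushovski--Pillay. The paper's own contribution (Proposition~\ref{P:3.1*} and Theorem~\ref{T:TA*}) reproves a strengthened form by following the Hrushovski--Pillay strategy, which is entirely model-theoretic: one passes to the algebraic closure $D=R(\sqrt{-1})$, picks independent generic points $a,b,c=ab$ of $G$, replaces them by interalgebraic tuples for which the group operation is witnessed by quantifier-free (i.e.\ rational) $D$-definable functions, and feeds the resulting configuration into the group-chunk/group-configuration machinery (Proposition~1.8.1 of \cite{HP1}) to produce a connected $D$-definable group $H$. The local homeomorphism $f$ is then obtained from the interdefinability (in $\mathcal{R}$) of the generic points of $G$ and $H(R)$, not from any claim that the original group law is rational in a chart.

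Your proposal takes a genuinely different, geometric route, and the rationalization step in the middle is where it breaks down. You take the Zariski closure $\Gamma$ of the graph of $\mu$ and argue that the component through the origin is birational onto $\mathbb{A}^{2d}_R$. The justification you offer---that continuity at $e$ singles out a branch and ``algebraic rigidity of the group axioms'' forces it to be rational---is not an argument. Continuity selects an analytic (Nash) branch of an algebraic correspondence, but Nash functions on $R^d$ are typically not rational: think of $x\mapsto\sqrt{1+x^2}$, whose graph has Zariski closure $y^2=1+x^2$, with the continuous branch $y>0$ manifestly irrational. Nothing in the identity or associativity axioms, which are semialgebraic identities among Nash functions, prevents the multiplication $\mu$ in an arbitrary chart from involving such branch phenomena. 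What Theorem~A actually says is that \emph{some} change of coordinates $f$ makes the product rational, and producing that $f$ is the whole content of the theorem; you cannot assume it holds in the chart $\varphi$ you started with. The Hrushovski--Pillay argument sidesteps this by never trying to rationalize $\mu$ directly: it builds $H$ inside the algebraically closed field $D$ (where definable $=$ constructible, so the group-chunk construction automatically yields an algebraic group) and only afterwards descends to $R$ via interalgebraicity of generics.
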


Where by a \textit{definably connected group} definable in $\mathcal{R}$ we mean that $G$ has no proper ($\mathcal{R}$)-definable subgroups of finite index.

Nevertheless, the neighborhood around the identity of $G$ given by the above Hrushovski and Pillay's result could not give enough information about $G$; for instance, if $U$ is too small. Consider the following example, the group $\left(\left[0,1\right)\subseteq R,+_{mod\, 1}\right)$ with addition modulo $1$ is locally homomorphic to $\left(R,+\right)$, where $\left(R,<,+,0,\cdot,1\right)$ is a $\aleph_{1}$-saturated real closed field. More precisely, the definable bijection $f:\left[0,\beta\right)\cup\left(1-\beta,1\right)\rightarrow\left(-\beta,\beta\right)$ defined by $f\left(x\right)=x$ if $x\in\left[0,\beta\right)$, or $f\left(x\right)=x-1$ if $x\in\left(1-\beta,1\right)$, where $0<\beta\ll\frac{1}{n}$ for every $n\in\mathbb{N}$ (i.e., $\beta$ is a positive infinitesimal), is a local homomorphism between $\left(\left[0,1\right)\subseteq R,+_{mod\, 1}\right)$ and $\left(R,+\right)$, where by a \textit{local homomorphism} between two groups we mean a map between some neighborhoods of their identities that acts as a group homomorphism inside its domain (see Def. \ref{D:homomoloc}). But $U=\left[0,\beta\right)\cup\left(1-\beta,1\right)$ cannot cover $G$ with finitely many group translates, and even the subgroup $\left\langle U\right\rangle $ generated by $U$ cannot say nothing about the torsion of $G$.

Fortunately, the definably compactness (see Def. \ref{D:TopNotLdGps}) of $G$ allows us to obtain a local homomorphism between $G$ and $H(R)$ whose domain is a generic definable set in $G$.

From now on, we will follow the next conventions. By a \textit{sufficiently saturated structure} we mean a $\kappa$-saturated structure for some sufficiently large cardinal $\kappa$. By a \textit{type-definable set}
in a sufficiently saturated structure $\mathcal{M}$ we mean a subset of $M^{n}$ that is the intersection
of less than $\kappa$-many definable sets. And given a group $G$ $\bigvee$-definable in an o-minimal structure, by $G^{00}$ we denote the smallest type-definable subgroup of $G$ of index $<\kappa$; if $G$ is definable, then  $G^{00}$ exists, by \cite{Shelah}.

In this paper we prove the next theorem.

\begin{customthm}{\ref{T:TA*}}
\textit{Let $G$ be a definably compact definably connected group definable in a sufficiently saturated real closed field $R$. Then there are \begin{enumerate}[(i)]
\item a connected $R$-algebraic group $H$ such that $\dim\left(G\right)=\dim\left(H\left(R\right)\right)=\dim\left(H\right)$,   \item a definable $X\subseteq G$ such that $G^{00}\subseteq X$,
\item a definable homeomorphism $\phi:X\subseteq G\rightarrow \phi\left(X\right)\subseteq H\left(R\right)$ such that $\phi$ and $\phi^{-1}$ are local homomorphisms.
\end{enumerate}}
\end{customthm}

To prove the above result we introduce the notion of group-generic point in  Section \ref{S:3}. An element $a$ of a group $G$ definable over $A\subseteq M$ is called \textit{group-generic} of $G$ over $A$ if every $A$-definable $X\subseteq G$ with $a\in X$ is \textit{generic} in $G$ (namely, $X$ covers $G$ by finitely many group translates), where $\mathcal{M}=\left(M,<,\ldots\right)$ is a sufficiently saturated o-minimal structure. We show the existence of group-generic points in definably compact groups definable in a sufficiently saturated o-minimal expansion of a real closed field (Prop. \ref{P:SupL1}) as well as we establish some properties of generic, group-generic points, and generic sets. With these tools we adapt the proof of \cite[Prop. 3.1]{HP1} to obtain a strong version of the group configuration result for definably compact groups (Prop. \ref{P:3.1*}), which is one of the main ingredients for Theorem \ref{T:TA*}.

In the second paper (\cite{BADefComp-II}) we combine Theorem \ref{T:TA*} and a study of locally definable covering homomorphisms for locally definable groups to prove the following: if $G$ is an abelian definably compact definably connected group $G$ definable in a sufficiently saturated real closed field, then its o-minimal universal covering group $\widetilde{G}$ is definably isomorphic, as a locally definable group, to a connected open locally definable subgroup of the o-minimal universal covering group $\widetilde{H\left(R\right)^{0}}$ of the
group $H\left(R\right)^{0}$ for some connected $R$-algebraic group $H$.

This research is part of my PhD thesis at the Universidad de los Andes, Colombia and University of Haifa, Israel.

\subsection{The structure of the paper}

Section \ref{S:2} contains some basic background used throughout the paper. Group-generic points are introduced and studied in Section \ref{S:3}. We define group-generic points for $\bigvee$-definable groups and show their existence in definably compact groups definable in a sufficiently saturated o-minimal expansion of a
real closed field as well as we establish some of their properties and connections with generic points and generic sets. In Section \ref{S:4}, we show a group configuration proposition for
definably compact groups (Prop. \ref{P:3.1*}) used in the proof of the main result of this paper: Theorem \ref{T:TA*}, which is proved in Section \ref{S:5}.

\begin{notation*}\label{N:}
Our notation and any undefined term that we use from model theory, topology, or algebraic geometry are generally standard. For a group $G$ whose group operation is written multiplicatively, we use the following notation $\prod_{n}X=\underset{n\textrm{-times}}{\underbrace{X\cdot\ldots\cdot X}}$,
and $X^{n}=\left\{ x^{n}:x\in X\right\} $ for any $n\in\mathbb{N}$.
\end{notation*}

\section{Preliminaries}\label{S:2}

Familiarity with basic facts about o-minimality is assumed (they
can be found in \cite{KniPiSte86}, \cite{PiSte86}, and \cite{LVD}).

Given a first-order structure $\mathcal{M}$ with universe $M$, we
say that a set $C\subseteq M^{k}$ is \textit{definable} in $\mathcal{M}$
over $A\subseteq M$ if there is a first order formula $\psi\left(x\right)$
with parameters from $A$ such that $C=\left\{ c\in M^{k}:\mathcal{M}\models\psi\left(c\right)\right\} $.
A set $C\subseteq M^{k}$ is definable in $\mathcal{M}$ if it is definable
over $M$. A \textit{function} $f:C\subseteq M^{k}\rightarrow M^{n}$
\textit{is definable} if its graph is a definable set. A group $\left(G,\cdot\right)$
is definable if $G$ is a definable set and its group multiplication
is a definable function.

In an o-minimal structure $\mathcal{M}=\left(M,<,\ldots\right)$ with
$C\subseteq M^{k}$ definable in $\mathcal{M}$, we define the  \textit{(geometric) dimension of} $C$, $\dim\left(C\right)$, as the maximal $n\leq m$ such that the projection of $C$ onto $n$ coordinates contains an open set of $M^{n}$, where $M^{n}$ has the product topology induced by the order topology on $M$.

\textit{From now until the end of this section, let $\mathcal{M}=\left(M,<,\ldots\right)$ be a sufficiently
saturated o-minimal structure.}

\subsection{Algebraic dimension and generic points}\label{S:3.1}

Recall that for $A\subseteq M$ and $b\in M$, $b$ is in the \textit{algebraic closure of}
$A$ ($b\in acl\left(A\right)$) if $b$ is an element of a finite
$A$-definable set. And $b$ is in the \textit{definable closure of}
$A$ ($b\in dcl\left(A\right)$) if the singleton $\left\{ b\right\} $
is $A$-definable. We can consider in this definitions of algebraic and definable closure finite tuples from $M$ instead of elements of $M$ with exactly the same definitions.

By the Exchange Lemma (\cite[Thm. 4.1]{PiSte86}), we can define a model theoretic notion of
dimension.

\begin{dfn}\label{D:}

Let $A\subseteq M$ and a tuple $a\in M^{n}$. The ($acl$-)\textit{dimension of}
$a$ over $A$, $\dim\left(a/A\right)$, is the cardinality of any
maximal $A$-algebraically independent subtuple of $a$. If $p\in S\left(A\right)$,
then $\dim\left(p\right)=\dim\left(a/A\right)$ for any tuple $a\in M^{n}$
realising $p$.

\end{dfn}

We recall some properties of this notion of dimension.

\begin{fact}\label{F:AlgDimProper}\cite[Lemma 1.2]{Pi}
Let $A,B\subseteq M$ and tuples $a\in M^{n}$ and $b\in M^{m}$.
\begin{enumerate}[(i)]

\item If $A\subseteq B$, then $\dim\left(a/A\right)\geq\dim\left(a/B\right)$.

\item (Additivity) $\dim\left(ab/A\right)=\dim\left(a/Ab\right)+\dim\left(b/A\right)$.

\item (Symmetry) $\dim\left(a/Ab\right)=\dim\left(a/A\right)$ $\Leftrightarrow$
$\dim\left(b/Aa\right)=\dim\left(b/A\right)$.

\item Let $p\in S\left(A\right)$. If $A\subseteq B$, there is $q\in S\left(B\right)$
such that $q\supseteq p$ and $\dim\left(p\right)=\dim\left(q\right)$.

\end{enumerate}

\end{fact}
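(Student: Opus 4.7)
All four items follow from the fact that $acl$ in an o-minimal structure $\mathcal{M}$ defines a pregeometry, via the Exchange Lemma \cite[Thm.~4.1]{PiSte86}; the plan is to reduce each item to standard pregeometry bookkeeping.

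For (i), a $B$-algebraically independent subtuple of $a$ is \emph{a fortiori} $A$-algebraically independent, since $acl(AC) \subseteq acl(BC)$ for every $C$. Hence a maximal $B$-independent subtuple of $a$ (of size $\dim(a/B)$) can be extended to a maximal $A$-independent subtuple, yielding $\dim(a/A) \geq \dim(a/B)$.

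For (ii), choose a maximal $A$-independent subtuple $b' \subseteq b$ of size $\dim(b/A)$ and a maximal $Ab$-independent subtuple $a' \subseteq a$ of size $\dim(a/Ab)$. I would show $a'b'$ is itself a maximal $A$-independent subtuple of the concatenation $ab$. Since $b \subseteq acl(Ab')$ and $a \subseteq acl(Aba')$, one obtains $ab \subseteq acl(Aa'b')$. For $A$-independence of $a'b'$, suppose some coordinate $c \in a'b'$ lies in $acl(A, (a'b') \setminus \{c\})$. If $c \in a'$, then $c \in acl(Ab, a' \setminus \{c\})$, contradicting the $Ab$-independence of $a'$. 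If $c \in b'$, then by Exchange one derives an $A$-dependence within $b'$ (using that $a'$ is $Ab$-independent, hence stays independent once we remove $c$), contradicting the $A$-independence of $b'$. Therefore $\dim(ab/A) = |a'| + |b'| = \dim(a/Ab) + \dim(b/A)$. Item (iii) is then immediate: apply (ii) to both orderings of the pair $(a,b)$, obtain $\dim(a/Ab) + \dim(b/A) = \dim(b/Aa) + \dim(a/A)$, and rearrange to get the claimed biconditional.

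For (iv), fix $a \models p$ with $\dim(a/A) = k$, and reorder so that $(a_1, \ldots, a_k)$ is a maximal $A$-independent subtuple of $a$. Consider the partial type $\Pi(x)$ over $B$ consisting of $p(x)$ together with, for each $B$-formula $\phi(y; \bar z)$, each $N \in \mathbb{N}$, and each index $i \leq k$, the scheme expressing that whenever $\phi(y; x_1,\ldots,\widehat{x_i},\ldots,x_k)$ has at most $N$ solutions in $y$, then $\neg \phi(x_i; x_1,\ldots,\widehat{x_i},\ldots,x_k)$; this encodes that $(x_1, \ldots, x_k)$ is $B$-algebraically independent. To check that $\Pi$ is finitely satisfiable, any finite subset pins down a finite list of $B$-definable sets $S_1, \ldots, S_m \subseteq M^k$, each of dimension $<k$, together with finitely many $A$-formulas from $p$. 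The latter define (on the first $k$ coordinates) an $A$-set $D$ containing the $A$-independent tuple $(a_1,\ldots,a_k)$, hence of o-minimal dimension $k$; by o-minimal dimension theory $D$ cannot be covered by the finitely many lower-dimensional $S_j$, so a realization of the given finite subset exists. By saturation of $\mathcal{M}$, $\Pi$ is realized by some $a'$; setting $q = \mathrm{tp}(a'/B)$ gives an extension of $p$ with $\dim(q) \geq k$, while (i) gives $\dim(q) \leq \dim(p) = k$, yielding equality. Among the four items, (iv) is the main obstacle: the careful encoding of \emph{algebraic independence over $B$} as a first-order scheme, together with the invocation of o-minimal dimension additivity to rule out coverage by lower-dimensional sets, is the only step requiring more than pure pregeometry arithmetic.
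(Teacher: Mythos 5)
The paper does not prove this statement at all: it is quoted verbatim as \cite[Lemma 1.2]{Pi}, so there is no in-paper argument to compare against. Your proof is a correct, self-contained reconstruction of the standard argument from the pregeometry axioms (Exchange) plus o-minimal dimension theory, and it is the right way to prove it. Items (i) and (iii) are fine as written, and your treatment of (iv) --- encoding $B$-algebraic independence of the first $k$ coordinates as a first-order scheme, checking finite satisfiability by observing that each scheme instance cuts out a $B$-definable set of dimension $<k$ while the projection of the $p$-part is $A$-definable of dimension $k$, and then capping $\dim(q)$ from above with (i) --- is exactly what is needed and is the genuinely nontrivial item. One small bookkeeping slip in (ii): in the case $c\in b'$, the Exchange argument does not generally land on a dependence \emph{within $b'$ over $A$}. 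Rather, take $a''\subseteq a'$ minimal with $c\in\textrm{acl}(A,a'',b'\setminus\{c\})$; if $a''=\emptyset$ you contradict the $A$-independence of $b'$ directly, and otherwise picking $d\in a''$ and applying Exchange gives $d\in\textrm{acl}(A,a''\setminus\{d\},b'\setminus\{c\},c)\subseteq\textrm{acl}(Ab,a'\setminus\{d\})$, contradicting the $Ab$-independence of $a'$. So the contradiction may fall on $a'$ rather than $b'$; the conclusion is unaffected, but you should state the case split. (A cosmetic caveat: realizing $\Pi$ inside $\mathcal{M}$ in (iv) uses $|B|<\kappa$, which is the standing convention in the paper; for arbitrary $B$ one only gets $q$ as a consistent complete type.)
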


\begin{dfn}\label{D:Indep}
Let $A\subseteq M$ and tuples $a\in M^{n}$ and $b\in M^{m}$.
\begin{enumerate}[(i)]

\item $a$ is \textit{independent} from $b$ over $A$, denoted
by $a\forkindep[A]b$, if $\dim\left(a/A\right)=\dim\left(a/Ab\right)$,
which is also expressed by saying that $\textrm{tp}\left(a/Ab\right)$
does not fork over $A$.

\item Let $X\subseteq M^{n}$ $A$-definable and $a\in X$. $a$
is a \textit{generic} point of $X$ over $A$ if $\dim\left(a/A\right)=\dim\left(X\right)$.

\end{enumerate}

\end{dfn}

Note, by \cite[Lemma 1.4]{Pi}, that the (geometric) dimension
of an $A$-definable set $X\subseteq M^{n}$ satisfies $\dim\left(X\right)=\max\left\{ \dim\left(a/A\right):a\in X\right\} $.

Since $\mathcal{M}$ is sufficiently saturated, we have that for every $X\subseteq M^{n}$ definable over $A\subseteq M$, with $\left|A\right|<\kappa$, there is $a\in X$ generic of $X$ over $A$.

\subsection{$\bigvee$-definable groups}\label{S:2.2}

\begin{dfn}\label{D:VdefGps}

A $\bigvee$-\textit{definable group}
is a group $\left(\mathcal{U},\cdot\right)$ whose universe is an
union $\mathcal{U}=\bigcup_{i\in I}Z_{i}$ of $\mathcal{M}$-definable
subsets of $M^{n}$ for some fixed $n$, all defined over $A\subseteq M$ with $\left|A\right|<\kappa$ such that for every $i,j\in I$
\begin{enumerate}[(i)]

\item there is $k\in I$ such that $Z_{i}\cup Z_{j}\subseteq Z_{k}$
(i.e., the union is directed), and

\item the group operation $\cdot\mid_{Z_{i}\times Z_{j}}$ and group
inverse $\left(\cdot\right)^{-1}\mid_{Z_{i}}$ are $\mathcal{M}$-definable
maps into $M^{n}$.
\end{enumerate}
We say that $\left(\mathcal{U},\cdot\right)$ is \textit{locally $\mathcal{M}$-definable}
if $\left|I\right|$ is countable.

A map between $\bigvee$-definable (locally definable) groups is called
$\bigvee$-\textit{definable} (\textit{locally $\mathcal{M}$-definable})
if its restriction to any $\mathcal{M}$-definable set is a $\mathcal{M}$-definable
map.

We define $\dim\left(\mathcal{U}\right)=\max\left\{ \dim\left(Z_{i}\right):i\in I\right\}$.

An element $a\in\mathcal{U}$ is \textit{generic} of $\mathcal{U}$ over $A$
is $\dim\left(a/A\right)=\dim\left(\mathcal{U}\right)$.
\end{dfn}

\begin{ex}\label{E:Exldgps}
\begin{enumerate}[(i)]
\item Let $G$ be a $\mathcal{M}$-definable group, and $X\subseteq G$
definable containing the identity element of $G$. Then the subgroup
$\mathcal{U}=\left\langle X\right\rangle =\bigcup_{n\in\mathbb{N}^{\times}}\prod_{n}X\cdot X^{-1}$
of $G$ generated by $X$ is a locally $\mathcal{M}$-definable group.
Then, in particular, every countable group is a locally definable
group in any structure as well as the commutator subgroup $\left[G,G\right]$ of a $\mathcal{M}$-definable
group $G$.

\item The o-minimal universal covering group (see \cite{EdPan}) of a connected locally $\mathcal{M}$-definable group exists and is a locally $\mathcal{M}$-definable group.

\item (\cite{PPant12}) Let $\left(G,<,+\right)$ be a sufficiently
saturated ordered divisible abelian group, and in it take an infinite
increasing sequence of elements $0<a_{1}<a_{2}<\cdots$ such that
$na_{i}<a_{i+1}$ for every $n\in\mathbb{N}$. The subgroup $\mathcal{U}=\bigcup_{i}\left(-a_{i},a_{i}\right)$
of $G$ is a $\bigvee$-definable group. This group has the distinction
of having no $\mathcal{U}^{00}$, and is not definably generated.

\end{enumerate}
\end{ex}

Any $\bigvee$-definable group in $\mathcal{M}$ can be endowed with a topology $\tau$ making it into a topological group (\cite[Prop. 2.2]{PS00}). This fact was a generalization
by Peterzil and Starchenko \cite{PS00} of the known result for the
definable groups by Pillay \cite{Pi}. In case $\mathcal{M}$
expands the reals, that topology makes any definable group into a
real Lie group. Moreover, any $\bigvee$-definable homomorphism between
two $\bigvee$-definable groups is continuous with respect to their
$\tau$ topologies \cite[Lemma 2.8]{PS00}, and any $\bigvee$-definable subgroup $\mathcal{W}$
of a $\bigvee$-definable group $\mathcal{U}$ is $\tau$-closed in $\mathcal{W}$, and if $\mathcal{W}$ and $\mathcal{U}$ have the same dimension, then $\mathcal{W}$ is also $\tau$-open in $\mathcal{W}$ \cite[Lemma 2.6]{PS00}. \textit{For the rest of the paper any topological property of $\bigvee$-definable groups refers to this $\tau$ topology.}

\begin{dfn}\label{D:TopNotLdGps}
Let $\mathcal{U}$ be a $\bigvee$-definable group.
\begin{enumerate}[(i)]
\item $\mathcal{U}$ is ($\tau$-)\textit{connected} if $\mathcal{U}$
has no nonempty proper ($\tau$-)clopen subset such that its intersection
with any definable subset of $\mathcal{U}$ is definable.

\item $\mathcal{U}$ is ($\tau$-)\textit{definably compact} if every definable
path $\gamma:\left(0,1\right)\rightarrow\mathcal{U}$ has limits points
in $\mathcal{U}$ (where the limits are taken with respect to the
$\tau$-topology).
\end{enumerate}
\end{dfn}

Note that if $\mathcal{U}$ is a definable group, the above definition of connectedness agrees with the known notion of definable connectedness for definable groups.

\subsection{Generic sets in $\bigvee$-definable groups}\label{S:3.2}

\begin{dfn}\label{D:}
Let $\mathcal{U}$ be a $\bigvee$-definable group. A set $X\subseteq\mathcal{U}$
is \textit{left (right) generic} in $\mathcal{U}$ if less than $\kappa$-many left (right) group translates of $X$ cover $\mathcal{U}$. $X$
is \textit{generic} if it is both left and right generic, and $X$
is called $n$-\textit{generic} if $n$-group translates of $X$
cover $\mathcal{U}$.
\end{dfn}

Thus, by saturation, a definable subset generic in a definable group covers the group in finitely many group translates. Moreover, by \cite[Fact 2.3(2)]{PPant12}, any left generic definable subset of a connected $\bigvee$-definable group $\mathcal{U}$ generates $\mathcal{U}$.

Some examples of generic definable subsets of a definable group $G$
are the large subsets in $G$; namely, a definable set $Y\subseteq G$
such that $\dim\left(G\setminus Y\right)<\dim\left(G\right)$, this
fact was proved by Pillay in \cite[Lemma 2.4]{Pi}. Also, note that
for the additive group $\left(M,+\right)$ a definable generic set
$X\subseteq M$ is generic if and only if $M\setminus X$ is bounded.
in $M$ (\cite[Remark 3.3]{PePi07}).

In case $G$ is a definably
compact definably connected group, by \cite[Prop. 4.2]{HPePiNIP},
for any definable set $X\subseteq G$, $X$ is left generic if and
only if $X$ is right generic, so we just say generic.

\begin{fact}\cite[Thm. 3.7]{PePi07}\label{F:T3.7}
Assume $G$ is a definably connected group definable in a sufficiently saturated o-minimal expansion of a real closed field, and $X\subseteq G$ a definable set whose closure in $G$ is definably compact. If $X$ is not left generic in $G$ then $G\setminus X$ is right generic in $G$. \end{fact}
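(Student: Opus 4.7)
The plan is to argue by contradiction. I assume $X$ is not left generic in $G$ and $G\setminus X$ is not right generic in $G$, and derive a contradiction.

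The non-genericity of $G\setminus X$ says that no finite collection of right translates $(G\setminus X)g_1,\ldots,(G\setminus X)g_n$ covers $G$, so the family $\{X\cdot g:g\in G\}$ has the finite intersection property. Passing to a sufficiently saturated elementary extension $M^{*}\succ M$, I find $a\in G(M^{*})$ such that $a\cdot g\in X(M^{*})$ for every $g\in G(M)$; in particular, $a\cdot G(M)\subseteq \overline{X}(M^{*})$.

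The key step uses the definable compactness of $\overline{X}$ to convert this nonstandard datum back into the original model. Since we are in an o-minimal expansion of a real closed field, every element of a definably compact set has a ``standard part'' in the original model; applying this to $a\in \overline{X}(M^{*})$ yields $h\in \overline{X}(M)$ such that $a$ is infinitesimally close to $h$. Continuity of the group operation then gives, for each $g\in G(M)$, that $h\cdot g$ lies in the topological closure of $X$, i.e., $h\cdot g\in \overline{X}$. But $h\in G$ and $G$ is a group, so $h\cdot G=G$, and I obtain $G\subseteq \overline{X}$; hence $G=\overline{X}$, and in particular $G$ is itself definably compact.

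With $G$ now definably compact and definably connected, \cite[Prop. 4.2]{HPePiNIP} tells me that left and right genericity of definable subsets of $G$ coincide, so both $X$ and $G\setminus X$ are non-generic. The fsg/Haar-measure machinery for NIP groups in o-minimal expansions of real closed fields then yields a contradiction: there is a unique left-invariant Keisler measure $\mu$ on $G$ with $\mu(X)+\mu(G\setminus X)=1$, and a definable set is generic if and only if it has positive $\mu$-measure, so at least one of $X$ and $G\setminus X$ must be generic, contradicting the starting assumptions.

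The main obstacle is the middle step, where the definable compactness of $\overline{X}$ is needed to ``standardize'' the nonstandard element $a$: without it, the left coset $a\cdot G(M)$ could escape to infinity in $G$, blocking the argument. This standard-part construction is the technical heart of the reduction to the definably compact case, after which the remaining work is measure-theoretic and follows general NIP group machinery.
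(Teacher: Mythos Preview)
The paper does not prove this statement: it is quoted verbatim as a Fact from \cite[Thm.~3.7]{PePi07}, with no argument supplied. So there is no ``paper's own proof'' to compare against, and I can only assess your attempt on its own merits.

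Your reduction in the first half is sound and is in the spirit of the original Peterzil--Pillay argument: the finite-intersection-property reformulation of ``$G\setminus X$ is not right generic'', the passage to a more saturated extension to realise the intersection, and the use of the definable compactness of $\overline{X}$ to take a standard part $h$ with $h\cdot G(M)\subseteq\overline{X}(M)$, hence $G=\overline{X}$, are all correct (modulo routine care in identifying standard parts for the group topology $\tau$ with those for the ambient $R^n$-topology).

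The gap is in the last paragraph. Having reduced to $G$ definably compact, you invoke \cite[Prop.~4.2]{HPePiNIP} (left $=$ right genericity) and the invariant Keisler measure with the equivalence ``generic $\Leftrightarrow$ positive measure'' to force one of $X$, $G\setminus X$ to be generic. But this machinery is not logically independent of what you are proving. The fsg property for definably compact $G$, which underlies both the left/right symmetry and the existence of the generic invariant measure, is itself established via \cite[Thm.~3.7]{PePi07}. The present paper makes this dependency explicit: immediately after Fact~\ref{F:T3.7} it records that Fact~\ref{F:ExistOfGenTypes} --- precisely the statement that non-generic definable subsets of a definably compact $G$ form an ideal, which is what your measure argument amounts to --- is a \emph{consequence} of \cite[Thm.~3.7]{PePi07}. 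So your final step is circular: you have reduced the general statement to its special case ($G$ definably compact) and then appealed to downstream corollaries of that special case rather than proving it. To close the argument you would need an independent proof of the definably compact case that avoids fsg and the Keisler measure, and that is exactly what \cite{PePi07} provides.
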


\subsection{Local homomorphisms}\label{S:4.2}

\begin{dfn}\label{D:homomoloc}
Let $G_{1}$ and $G_{2}$ be two topological groups, $X\subseteq G_{1}$ a neighborhood of the identity of $G_{1}$, and $\theta:X\rightarrow G_{2}$ a map. $\theta$ is called a \textit{local homomorphism} if $x,y,xy\in X$ implies $\theta\left(xy\right)=\theta\left(x\right)\theta\left(y\right)$. We say that an injective map $\theta:X\subseteq G_{1}\rightarrow G_{2}$ is a \textit{local homomorphism in both directions} if $\theta:X\rightarrow G_{2}$ and $\theta^{-1}:\theta\left(X\right)\rightarrow X$ are local homomorphisms.
\end{dfn}
\begin{rem}\label{R:inverseneednota loc homo}
Note that if $\theta:X\subseteq G_{1}\rightarrow G_{2}$ is an injective local homomorphism between the groups $G_{1}$, $G_{2}$, then $\theta^{-1}:\theta\left(X\right)\rightarrow X$ need not be a local homomorphism; for instance, consider the groups $G_{1}=\left(\mathbb{R},+\right)$, $G_{2}=\left(\left[0,1\right),+_{mod\, 1}\right)$, and $\theta:\left[-\frac{1}{8},\frac{1}{8}\right]\subseteq\mathbb{R}\rightarrow\left[0,1\right)$ the map $\theta\left(x\right)=\pi\left(3x\right)$ where $\pi:\mathbb{R}\rightarrow\left[0,1\right):t\mapsto t\,\textrm{mod}1$. Then $\theta$ is an injective local homomorphism, but $\theta^{-1}:\left[0,\frac{3}{8}\right]\cup\left[\frac{3}{8},1\right)=\theta\left(\left[-\frac{1}{8},\frac{1}{8}\right]\right)\rightarrow\left[-\frac{1}{8},\frac{1}{8}\right]\subseteq\mathbb{R}$ is not a local homomorphism. For this note that, for example $\frac{5}{8}+_{mod\, 1}\frac{5}{8}=\frac{1}{4}\in\theta\left(\left[-\frac{1}{8},\frac{1}{8}\right]\right)$, but $\theta^{-1}\left(\frac{5}{8}\right)+\theta^{-1}\left(\frac{5}{8}\right)=-\frac{1}{4}\notin\left[-\frac{1}{8},\frac{1}{8}\right]$. In Claim \ref{C:tetainvisloc_homomo} we formulate a necessary and sufficient condition on a local homomorphism $\theta$ in order for $\theta^{-1}$ to be a local homomorphism.
\end{rem}
\begin{claim}\label{C:tetainvisloc_homomo}
Let $\theta:X\subseteq G_{1}\rightarrow G_{2}$ be an injective local homomorphism between the groups $G_{1}$ and $G_{2}$. Then
\begin{enumerate}[(i)]
\item $\theta^{-1}:\theta\left(X\right)\subseteq G_{2}\rightarrow X\subseteq G_{1}$ is a local homomorphism if and only if for all $y_{1},y_{2}\in\theta\left(X\right)$ if $y_{1}\cdot y_{2}\in\theta\left(X\right)$, then $\theta^{-1}\left(y_{1}\right)\cdot\theta^{-1}\left(y_{2}\right)\in X$.
\item If there is $X^{\prime}\subseteq X$ such that $X^{\prime}\cdot X^{\prime}\subseteq X$, then $\theta\mid_{X^{\prime}}^{-1}:\theta\left(X^{\prime}\right)\rightarrow X^{\prime}$ is a local homomorphism.
\end{enumerate}
\end{claim}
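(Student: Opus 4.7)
The plan is to unwind the definition of a local homomorphism and use the injectivity of $\theta$ to transport equalities back and forth via $\theta$ and $\theta^{-1}$. No new ideas beyond bookkeeping are needed, so the proof is essentially a direct verification; the only thing requiring mild care is keeping track of where the relevant products live ($X$ vs.\ $\theta(X)$, or $X'$ vs.\ $X$).

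For the forward direction of (i), assume $\theta^{-1}$ is a local homomorphism and fix $y_{1},y_{2}\in\theta(X)$ with $y_{1}y_{2}\in\theta(X)$. Then by definition $\theta^{-1}(y_{1}y_{2})=\theta^{-1}(y_{1})\theta^{-1}(y_{2})$, and the left side already lies in $X$, giving the displayed closure property. For the converse, assume the closure property and fix $y_{1},y_{2},y_{1}y_{2}\in\theta(X)$. Write $x_{i}=\theta^{-1}(y_{i})\in X$; by hypothesis $x_{1}x_{2}\in X$, so $x_{1},x_{2},x_{1}x_{2}\in X$ and the local-homomorphism property of $\theta$ yields $\theta(x_{1}x_{2})=\theta(x_{1})\theta(x_{2})=y_{1}y_{2}$. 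Injectivity of $\theta$ on $X$ then forces $\theta^{-1}(y_{1}y_{2})=x_{1}x_{2}=\theta^{-1}(y_{1})\theta^{-1}(y_{2})$, which is exactly what we need.

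For (ii), I plan to derive the statement by applying (i) to the restricted map $\theta\mid_{X^{\prime}}\colon X^{\prime}\rightarrow G_{2}$. This restriction is still an injective local homomorphism because $X^{\prime}\subseteq X$: whenever $x_{1},x_{2},x_{1}x_{2}\in X^{\prime}$ all three are in $X$, so $\theta$'s local-homomorphism identity applies verbatim. To check the hypothesis of (i) for $\theta\mid_{X^{\prime}}$, fix $y_{1},y_{2}\in\theta(X^{\prime})$ with $y_{1}y_{2}\in\theta(X^{\prime})$, and set $x_{i}=(\theta\mid_{X^{\prime}})^{-1}(y_{i})\in X^{\prime}$. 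The absorption hypothesis $X^{\prime}\cdot X^{\prime}\subseteq X$ gives $x_{1}x_{2}\in X$, hence $\theta(x_{1}x_{2})=\theta(x_{1})\theta(x_{2})=y_{1}y_{2}$; on the other hand, by definition of $(\theta\mid_{X^{\prime}})^{-1}$ there is $x_{3}\in X^{\prime}$ with $\theta(x_{3})=y_{1}y_{2}$, and injectivity of $\theta$ on $X$ forces $x_{1}x_{2}=x_{3}\in X^{\prime}$. Thus the hypothesis of (i) is met for $\theta\mid_{X^{\prime}}$, and (i) delivers the conclusion.

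The potential pitfall, and the only point that needs attention, is that the product of two preimages a priori lives only in the ambient group $G_{1}$, not in the specified domain; the injectivity of $\theta$ is what lets us identify it with an element that does lie in the correct set (in $X$ for (i), and in $X^{\prime}$ for (ii)). Aside from this, the argument is routine.
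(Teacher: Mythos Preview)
Your proof is correct and follows essentially the same approach as the paper's. The only minor difference is in part (ii): the paper verifies the local-homomorphism identity for $(\theta\mid_{X'})^{-1}$ directly in one line, whereas you route through part (i) by checking its closure hypothesis for the restricted map; the underlying computation (use $X'\cdot X'\subseteq X$ to apply $\theta$'s identity, then invoke injectivity) is identical in both.
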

\begin{proof}
(i) Let $y_{1},y_{2},y_{1}\cdot y_{2}\in\theta\left(X\right)$ such that $\theta^{-1}\left(y_{1}\right)\cdot\theta^{-1}\left(y_{2}\right)\in X$, then $\theta\left(\theta^{-1}\left(y_{1}\right)\cdot\theta^{-1}\left(y_{2}\right)\right)=y_{1}\cdot y_{2}$ $\Leftrightarrow$$\theta^{-1}\left(y_{1}\cdot y_{2}\right)=\theta^{-1}\left(y_{1}\right)\cdot\theta^{-1}\left(y_{2}\right)$. The other direction is clear.

(ii) Let $\theta\left(x_{1}\right)$, $\theta\left(x_{2}\right)$, $\theta\left(x_{1}\right)\cdot\theta\left(x_{2}\right)\in\theta\left(X^{\prime}\right)$. Since $X^{\prime}\cdot X^{\prime}\subseteq X$, $\theta\left(x_{1}\cdot x_{2}\right)=\theta\left(x_{1}\right)\cdot\theta\left(x_{2}\right)$, then $\theta^{-1}\left(\theta\left(x_{1}\right)\right)\cdot\theta^{-1}\left(\theta\left(x_{2}\right)\right)=x_{1}\cdot x_{2}=\theta^{-1}\left(\theta\left(x_{1}\right)\cdot\theta\left(x_{2}\right)\right)\in\theta^{-1}\left(\theta\left(X^{\prime}\right)\right)=X^{\prime}$.
\end{proof}

\begin{rem}\label{R:R2}
Let $G_{1}$ and $G_{2}$ be two groups, and $W\subseteq G_{1}$. Let $\theta:W\rightarrow G_{2}$ be an injective local homomorphism. If there is $W^{\prime}\subseteq W$ such that $W^{\prime}W^{\prime}\subseteq W$, then $\left(\theta\mid_{W^{\prime}}\right)^{-1}:\theta\left(W^{\prime}\right)\rightarrow W^{\prime}\subseteq G_{1}$ is a local homomorphism.
\end{rem}
\begin{proof}
Let $x,y\in W^{\prime}$. Since $W^{\prime}W^{\prime}\subseteq W$, $\theta\left(xy\right)=\theta\left(x\right)\theta\left(y\right)$, then $\theta^{-1}\left(\theta\left(x\right)\right)\theta^{-1}\left(\theta\left(y\right)\right)=xy=\theta^{-1}\left(\theta\left(x\right)\theta\left(y\right)\right)$.
\end{proof}

\section{Group-generic points for $\bigvee$-definable groups}\label{S:3}

\textit{From now until the end of this chapter, $\mathcal{M}$ is a sufficiently saturated o-minimal expansion of a real closed field.}

\begin{dfn}\label{D:Gp-generic}
Let $\mathcal{U}$ be a $\bigvee$-definable group over $A\subseteq M$ and $a\in \mathcal{U}$.
\begin{enumerate}[(i)]

\item $a$ is a \textit{left (right) group-generic point} of $\mathcal{U}$
over $A$ if every $A$-definable $X\subseteq \mathcal{U}$ with $a\in X$
is left (right) in $\mathcal{U}$. $a$ is \textit{group-generic} if it is
both left and right generic.

\item A type $p$ is \textit{generic in} $\mathcal{U}$ if for every formula
$\varphi\in p$, $\varphi$ defines a generic subset in $\mathcal{U}$.

\end{enumerate}

Therefore, $a\in \mathcal{U}$ is group-generic of $\mathcal{U}$ over $A$ if and only
if $\textrm{tp}\left(a/A\right)$ is generic in $\mathcal{U}$.

\end{dfn}

\begin{rem}\label{R:}
Let $G$ be a group definable over $A\subseteq M$, and $a\in G$.
If $a$ is group-generic of $G$ over $A$, then $a$ is generic of
$G$ over $A$.
\end{rem}
\begin{proof}
Suppose that there is an $A$-definable set $Y$ with $a\in Y$ and
$\dim\left(Y\right)<\dim\left(G\right)$, then $\dim\left(Y\cap G\right)<\dim\left(G\right)$,
then $Y\cap G$ cannot be generic in $G$, but this contradicts the
group-genericity of $a$.
\end{proof}

\subsection{Basics on group-generic points}

Below we will discuss some properties of group-generic points and their relationships with generic points and generic sets.

The next fact is a consequence of \cite[Thm. 3.7]{PePi07}.

\begin{fact}\cite{PePi07}\label{F:ExistOfGenTypes}
Let $G$ be a definably compact definably connected definable group. Then:
\begin{enumerate}[(i)]
  \item The union of two nongeneric definable subsets in $G$ is also nongeneric in $G$.
  \item The set \[ \mathcal{I}=\left\{ X\subseteq G:X\,\textrm{is definable and nongeneric in }G\right\}\] is an ideal of $\left(Def\left(G\right),\cup,\cap\right)$, the Boolean algebra of definable subsets of $G$.
  \item There is a complete generic type $p\left(x\right)\in S^{\mathcal{M}}\left(A\right)$ in $G$.
\end{enumerate}
\end{fact}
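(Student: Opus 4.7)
The plan is to deduce all three parts from Fact~\ref{F:T3.7} combined with the equivalence of left and right genericity in definably compact definably connected groups noted just above (via \cite{HPePiNIP}). Part (i) is the substantive statement; parts (ii) and (iii) then follow by soft Boolean-algebra and compactness arguments.

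For (i) I would argue by contrapositive: assume $X_{1}\cup X_{2}$ is generic in $G$ and show that some $X_{i}$ is generic. Pick $g_{1},\dots,g_{n}\in G$ with $G=\bigcup_{i=1}^{n}g_{i}(X_{1}\cup X_{2})=Z_{1}\cup Z_{2}$, where $Z_{j}:=\bigcup_{i=1}^{n}g_{i}X_{j}$. If $Z_{1}$ is left generic, then finitely many translates of $Z_{1}$ cover $G$, hence so do finitely many translates of $X_{1}$, and $X_{1}$ is generic. Otherwise $Z_{1}$ is not left generic; since $G$ is definably compact, the closure of $Z_{1}$ in $G$ is definably compact, so Fact~\ref{F:T3.7} yields that $G\setminus Z_{1}$ is right generic. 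Because $Z_{1}\cup Z_{2}=G$ we have $G\setminus Z_{1}\subseteq Z_{2}$, so $Z_{2}$ is right generic; the equivalence of left and right genericity in this setting then gives that $Z_{2}$, and consequently $X_{2}$, is generic.

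Part (ii) is then immediate: closure under subsets is tautological, closure under finite unions is (i), and $\mathcal{I}$ is proper because $G$ itself is generic. For (iii), let $\mathcal{I}_{A}$ be the collection of $A$-definable nongeneric subsets of $G$, where $A$ contains parameters over which $G$ is defined; given any finite $X_{1},\dots,X_{n}\in\mathcal{I}_{A}$, their union lies in $\mathcal{I}_{A}$ by (ii), hence is distinct from $G$, so $G\setminus\bigcup_{i}X_{i}$ is a nonempty $A$-definable set. Thus the partial type $\Sigma(x):=\{x\in G\}\cup\{x\notin X:X\in\mathcal{I}_{A}\}$ is finitely satisfiable, and any complete extension $p\in S^{\mathcal{M}}(A)$ of $\Sigma$ is a complete generic type in $G$. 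The main obstacle sits inside (i): the key maneuver is to apply Fact~\ref{F:T3.7} to the auxiliary set $Z_{1}$ rather than to $X_{1}$ itself and then to exploit $G\setminus Z_{1}\subseteq Z_{2}$ to transfer genericity back to one of the original pieces---without the \textsc{PePi07} dichotomy there would be no way to convert non-left-genericity of $Z_{1}$ into positive information about $Z_{2}$.
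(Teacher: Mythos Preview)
Your argument is correct. Note that the paper does not actually give a proof of this statement: it records it as a Fact, prefaced only by ``The next fact is a consequence of \cite[Thm.~3.7]{PePi07}'' (i.e., Fact~\ref{F:T3.7} here). Your write-up is precisely the standard way to flesh out that one-line attribution---using the dichotomy of Fact~\ref{F:T3.7} together with the equivalence of left and right genericity from \cite{HPePiNIP} to get (i), and then the routine Boolean-algebra and compactness steps for (ii) and (iii). The one genuinely nontrivial move, passing to the auxiliary sets $Z_{1},Z_{2}$ so that $Z_{1}\cup Z_{2}=G$ and hence $G\setminus Z_{1}\subseteq Z_{2}$, is exactly what is needed; there is nothing to add or correct.
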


\begin{prop}\label{P:SupL1}
Let $G$ be a definably compact definably connected definable over $A\subseteq M$ group. Then:
\begin{enumerate}[(i)]
  \item If $\left|A\right|<\kappa$, then there is a group-generic point $a$ of $G$ over $A$.
  \item Let $p\in S^{\mathcal{M}}\left(A\right)$ be a generic type in $G$. Then for any $B$ such that $A\subseteq B\subseteq M$ there is a generic type $q\in S^{\mathcal{M}}\left(B\right)$ in $G$ such that $q\supseteq p$ and $q\vert_{A}=p$.
\end{enumerate}
\end{prop}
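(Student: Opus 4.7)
For part (i), the plan is to apply Fact~\ref{F:ExistOfGenTypes}(iii) directly: it furnishes a complete generic type $p\in S^{\mathcal{M}}(A)$ in $G$, and the hypothesis $|A|<\kappa$ together with the $\kappa$-saturation of $\mathcal{M}$ lets me realize $p$ by some $a\in G$. Then $a$ is group-generic over $A$ by an immediate unwinding of definitions: any $A$-definable $X\subseteq G$ containing $a$ is cut out by some formula $\varphi_X\in p$, and since $p$ is generic, $\varphi_X$ defines a generic subset of $G$, i.e.\ $X$ is generic. Because $G$ is definably compact and definably connected, left and right genericity coincide (as recalled just after Fact~\ref{F:T3.7}), so $a$ is both left and right group-generic.

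For part (ii), the plan is a standard compactness argument built on the ideal of nongeneric sets. I would consider the partial type over $B$
\[
\Sigma(x) \;=\; p(x) \;\cup\; \{\,\neg\psi(x) : \psi \text{ is a } B\text{-formula with } \psi(G) \text{ nongeneric in } G\,\}.
\]
Any completion $q\in S^{\mathcal{M}}(B)$ of $\Sigma$ is by construction a generic type in $G$ extending $p$; and since $p$ is already complete over $A$, the inclusion $q\supseteq p$ automatically forces $q\vert_A=p$. So everything reduces to showing that $\Sigma$ is consistent.

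This consistency is the only step requiring real work, and I expect it to be the main (albeit routine) point. Suppose $\Sigma$ is inconsistent; by compactness, together with closure of $p$ under finite conjunction, there exist $\varphi\in p$ and $B$-definable nongeneric sets $\psi_1(G),\ldots,\psi_m(G)\subseteq G$ such that $\varphi(G)\subseteq \psi_1(G)\cup\cdots\cup\psi_m(G)$. Fact~\ref{F:ExistOfGenTypes}(i)--(ii) says the nongeneric definable subsets of $G$ form an ideal, so the right-hand side is itself nongeneric; and any subset of a nongeneric set is plainly nongeneric (otherwise a generic subset $X\subseteq Y$ would be covered by finitely many translates, and then so would $Y$). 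Hence $\varphi(G)$ is nongeneric, contradicting $\varphi\in p$ and the genericity of $p$. I do not anticipate any further obstacle: the argument is the standard passage from a proper ideal of ``small'' sets to the extensibility of types that avoid it.
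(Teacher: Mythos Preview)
Your proposal is correct and follows essentially the same approach as the paper: for (i) you invoke Fact~\ref{F:ExistOfGenTypes}(iii) and realize the resulting generic type by saturation, and for (ii) you extend $p$ by adjoining the negations of all $B$-definable nongeneric sets and use the ideal property (Fact~\ref{F:ExistOfGenTypes}(i)--(ii)) to verify consistency. The paper writes the added formulas as $\neg\psi\wedge\varphi$ (with $\varphi$ defining $G$) rather than just $\neg\psi$, but this is a cosmetic difference since $p$ already concentrates on $G$; your parenthetical justification that subsets of nongeneric sets are nongeneric is slightly garbled (it is $G$, not $X$, that would be covered by finitely many translates of $X$), but the intended argument is clear and correct.
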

\begin{proof}
(i) By Fact \ref{F:ExistOfGenTypes}(iii), there is a complete generic type $p\left(x\right)\in S^{\mathcal{M}}\left(A\right)$ in $G$. Then, by saturation, there is $a\models p$, so $\textrm{tp}\left(a/A\right)=p$ is generic in $G$; i.e., $a$ is group-generic of $G$ over $A$.

(ii) Suppose that $G=\varphi\left(\mathcal{M},a\right)$ for some $\mathcal{L}_{A}$-formula
$\varphi$ with $a\subseteq A$. Let \[ \mathcal{I}_{B}=\left\{ X\subseteq G:\textrm{ \ensuremath{X} is \ensuremath{\mathcal{M}}-definable over \ensuremath{B} and nongeneric in }G\right\} \]
and \[ \Phi_{B}=\left\{ \lnot\psi\land\varphi:\psi\textrm{ is a \ensuremath{\mathcal{L}_{B}}-formula that defines in \ensuremath{\mathcal{M}} a set in \ensuremath{\mathcal{I}_{B}}}\right\} .\]
Let $\overline{p}\coloneqq p\cup\Phi_{B}$. Then $\overline{p}$ is
a partial type since if there are $\left\{ \theta_{i}\right\} _{i<k_{1}}\subseteq p$,
$\left\{ \lnot\psi_{j}\land\varphi\right\} _{j<k_{2}}\subseteq\Phi_{B}$
such that $\bigwedge_{i<k_{1}}\theta_{i}\land\bigwedge_{j<k_{2}}\lnot\psi_{j}\land\varphi$
is not satisfiable, then $\bigwedge_{i<k_{1}}\theta_{i}\rightarrow\bigvee_{j<k_{2}}\psi_{j}$
is satisfiable, but $\bigwedge_{i<k_{1}}\theta_{i}\in p$ and by Fact \ref{F:ExistOfGenTypes}(i), $\bigvee_{j<k_{2}}\psi_{j}$ is nongeneric, so there is a formula in $p$ that implies a nongeneric formula, which
contradicts the genericity of $p$. Hence, $\overline{p}$ is finitely
satisfiable. Now, let $q\left(x\right)$ be any complete type in $S^{\mathcal{M}}\left(B\right)$
such that $q\supseteq\overline{p}$, then $q$ is a generic type in $G$ and $q\vert_{A}=p$. This finishes the proof of (ii).
\end{proof}

\begin{cor}\label{C:SupL2}
Let $G$ be a definably compact definably connected group definable over $A\subseteq M$ with $\left|A\right|<\kappa$. Let $a\in G$ be a group-generic element of $G$ over $A$ , and $c$ a finite tuple from $M$. Then,
\begin{enumerate}[(i)]

  \item there is $a^{\prime}\in G$ such that $\textrm{tp}^{\mathcal{M}}\left(a^{\prime}/A\right)=\textrm{tp}^{\mathcal{M}}\left(a/A\right)$ and $a^{\prime}$ is a group-generic element of $G$ over $Ac$.
  \item There is $c^{\prime}$ a finite tuple from $M$ such that $\textrm{tp}^{\mathcal{M}}\left(c^{\prime}/A\right)=\textrm{tp}^{\mathcal{M}}\left(c/A\right)$ and $a$ is a group-generic element of $G$ over $Ac^{\prime}$.
  \item There is $c^{\prime}$ a generic element of $G$ over $Aa$ such that $a$ is group-generic of $G$ over $Ac^{\prime}$.
  \item Let $b\in G$ group-generic of $G$ over $Ac$. If $c^{\prime}$ is a finite tuple from $M$ such that $\textrm{tp}^{\mathcal{M}}\left(c^{\prime}/Ab\right)=\textrm{tp}^{\mathcal{M}}\left(c/Ab\right)$, then $b$ is group-generic of $G$ over $Ac^{\prime}$.
  \item Let $b\in G$ group-generic of $G$ over $A$ and $a$ group-generic of $G$ over $Ab$. Then there is $c^{\prime}$ a finite tuple from $M$ such that $\textrm{tp}^{\mathcal{M}}\left(c^{\prime}/A\right)=\textrm{tp}^{\mathcal{M}}\left(c/A\right)$, and $b$ and $a$ are group-generic of $G$ over $Ac^{\prime}$ and $Abc^{\prime}$, respectively.
   \item Let $b\in G$ group-generic of $G$ over $A$ and $a$ group-generic of $G$ over $Ab$. Then there is $c^{\prime}$ generic of $G$ over $Aab$ such that $b$ and $a$ are group-generic of $G$ over $Ac^{\prime}$ and $Abc^{\prime}$, respectively.
\end{enumerate}
\end{cor}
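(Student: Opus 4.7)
The plan is to exploit two tools: the generic extension property from Proposition \ref{P:SupL1}(ii), which extends a generic type over $A$ to a generic type over any larger parameter set while preserving its restriction to $A$, and the homogeneity of the sufficiently saturated $\mathcal{M}$, which lets any partial elementary map fixing $A$ pointwise be extended to an $A$-automorphism of $\mathcal{M}$. Throughout, group-genericity in $G$ is preserved under $A$-automorphisms, since $G$ and its group structure are definable over $A$, so automorphisms fixing $A$ map generic subsets of $G$ to generic subsets of $G$.

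Parts (i) and (ii) are the base cases. For (i), the type $\textrm{tp}(a/A)$ is generic in $G$, so by Proposition \ref{P:SupL1}(ii) it extends to a generic type $q\in S^{\mathcal{M}}(Ac)$ with $q|_{A}=\textrm{tp}(a/A)$; saturation supplies a realization $a'\models q$. For (ii), apply (i) to obtain $a'$ with $\textrm{tp}(a'/A)=\textrm{tp}(a/A)$ and $a'$ group-generic over $Ac$, pick $\sigma\in\textrm{Aut}(\mathcal{M}/A)$ with $\sigma(a')=a$, and set $c':=\sigma(c)$; then $\textrm{tp}(c'/A)=\textrm{tp}(c/A)$ and applying $\sigma$ transfers group-genericity of $a'$ over $Ac$ to that of $a$ over $Ac'$. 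Part (iv) is analogous but simpler: saturation gives $\sigma\in\textrm{Aut}(\mathcal{M}/Ab)$ with $\sigma(c)=c'$, and $\sigma$ transports group-genericity of $b=\sigma(b)$ from $Ac$ to $Ac'$.

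For (iii), select $c$ to be generic over $Aa$ (e.g.\ take $c\in G$ group-generic over $Aa$ via Proposition \ref{P:SupL1}(i)) and then rerun the construction of (ii). The new point to verify is that the resulting $c'=\sigma(c)$ is still generic over $Aa$: since $a'$ is generic over $Ac$, Fact \ref{F:AlgDimProper}(iii) yields that $c$ is independent from $a'$ over $A$; applying $\sigma$ gives $c'$ independent from $a$ over $A$, so $\dim(c'/Aa)=\dim(c/A)=\dim(G)$.

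Parts (v) and (vi) are the main obstacle, because the joint type $\textrm{tp}(a,b/A)$ must be preserved while simultaneously installing two group-genericities. The remedy is a two-step generic extension, performed in the order dictated by the chain ``$b$ group-generic over $A$, then $a$ group-generic over $Ab$''. First, extend $\textrm{tp}(b/A)$ to a generic type over $Ac$ and realize it by $b^{*}$. Second, the transport of $\textrm{tp}(a/Ab)$ across $b\mapsto b^{*}$ is a generic type over $Ab^{*}$; extending it to a generic type over $Ab^{*}c$ and realizing it by $a^{*}$ automatically preserves $\textrm{tp}(a^{*},b^{*}/A)=\textrm{tp}(a,b/A)$ and makes $a^{*}$ group-generic over $Ab^{*}c$. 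Then an $A$-automorphism $\tau$ with $\tau(a^{*},b^{*})=(a,b)$, applied to $c$, yields $c':=\tau(c)$ which witnesses (v). For (vi) one starts with $c$ chosen generic over $Aab$; applying Fact \ref{F:AlgDimProper}(iii) at each of the two extension steps (as in the proof of (iii)) shows $c$ remains generic first over $Ab^{*}$ and then over $Aa^{*}b^{*}$, whence $\tau$ transports this to $c'$ generic over $Aab$. The principal difficulty is precisely this coordination: the order of the two extensions is forced by the directionality of ``$a$ group-generic over $Ab$'', and one must check that the independence of $c$ from the new realizations survives each extension before the final transport.
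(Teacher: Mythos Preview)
Your proposal is correct. Parts (i), (ii), and (iv) match the paper's proof exactly; part (iii) differs only cosmetically (you start with $c$ generic over $Aa$, the paper takes $c$ generic over $A$), and the same symmetry-of-independence argument closes both.

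For (v) and (vi) you take a genuinely different route. The paper iterates (ii): first it finds $c_1$ with $\textrm{tp}(c_1/A)=\textrm{tp}(c/A)$ and $b$ group-generic over $Ac_1$, then (applying (ii) over the base $Ab$) it finds $c'$ with $\textrm{tp}(c'/Ab)=\textrm{tp}(c_1/Ab)$ and $a$ group-generic over $Abc'$; finally (iv) transfers $b$'s group-genericity from $Ac_1$ to $Ac'$. You instead move the pair $(a,b)$: realize $b^{*}$ group-generic over $Ac$, then $a^{*}$ group-generic over $Ab^{*}c$ while preserving $\textrm{tp}(a^{*},b^{*}/A)=\textrm{tp}(a,b/A)$, and transport everything back via a single $A$-automorphism sending $(a^{*},b^{*})$ to $(a,b)$. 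Your approach is a clean one-shot construction that avoids invoking (iv), at the price of verifying that the two-stage realization preserves the joint type over $A$ (which it does, since the second extension restricts correctly to $Ab^{*}$). The paper's approach is more modular---it literally reduces (v) to earlier parts---which is why (iv) is stated and proved separately there. For (vi) both arguments finish identically, by tracking independence of $c$ (resp.\ $c'$) from the new realizations via Fact~\ref{F:AlgDimProper}(iii).
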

\begin{proof}
(i) Since $a$ is group-generic of $G$ over $A$, $\textrm{tp}^{\mathcal{M}}\left(a/A\right)$
is a complete generic type in $G$. Let $B=Ac$. Then by Prop. \ref{P:SupL1}(ii), there
is a generic type $q\in S{}^{\mathcal{M}}\left(B\right)$ in $G$
such that $q\supseteq \textrm{tp}^{\mathcal{M}}\left(a/A\right)$ and $q\vert_{A}=\textrm{tp}^{\mathcal{M}}\left(a/A\right)$.
As $\mathcal{M}$ is sufficiently saturated, there is $a^{\prime}\subseteq M$
such that $a^{\prime}\models q$, thus $\textrm{tp}^{\mathcal{M}}\left(a^{\prime}/B\right)=q$.
Since $q\vert_{A}=\textrm{tp}^{\mathcal{M}}\left(a/A\right)$, $\textrm{tp}^{\mathcal{M}}\left(a^{\prime}/A\right)=\textrm{tp}^{\mathcal{M}}\left(a/A\right)$.

(ii) By (i), there is $a^{\prime}\subseteq M$ such that $\textrm{tp}^{\mathcal{M}}\left(a^{\prime}/A\right)=\textrm{tp}^{\mathcal{M}}\left(a/A\right)$
and $a^{\prime}$ is a group-generic element of $G$ over $Ac$. As $\mathcal{M}$
is sufficiently saturated, $\textrm{tp}^{\mathcal{M}}\left(a^{\prime}/A\right)=\textrm{tp}^{\mathcal{M}}\left(a/A\right)$
if and only if there is $f\in Aut\left(\mathcal{M}/A\right)$ such
that $f\left(a^{\prime}\right)=a$. Then $a^{\prime}$ is a group-generic
element of $G$ over $Ac$ if and only if $a$ is a group-generic element
of $G$ over $Af\left(c\right)$, so with $c^{\prime}=f\left(c\right)$
we obtain the desired conclusion.

(iii) Let $c$ be a generic element of $G$ over $A$. By (ii), there
is $c^{\prime}\subseteq M$ such that $\textrm{tp}^{\mathcal{M}}\left(c^{\prime}/A\right)=\textrm{tp}^{\mathcal{M}}\left(c/A\right)$
and $a$ is a group-generic element of $G$ over $Ac^{\prime}$. As $a$
is group-generic of $G$ over $Ac^{\prime}$, $a\forkindep[A]c^{\prime}$. Since
$\textrm{tp}^{\mathcal{M}}\left(c^{\prime}/A\right)=\textrm{tp}^{\mathcal{M}}\left(c/A\right)$
and $c$ is generic of $G$ over $A$, then $c^{\prime}$ is generic
of $G$ over $A$, but $c^{\prime}\forkindep[A]a$, then $c^{\prime}$
is generic of $G$ over $Aa$.

(iv) First note the following.

\begin{claim}\label{C:f(b)isGG}
Let $b\in G$ group-generic of $G$ over $Ac$, and $f\in Aut\left(\mathcal{M}/A\right)$.
Then $f\left(b\right)$ is group-generic of $G$ over $Af\left(c\right)$.
\end{claim}
\begin{proof}
Let $\varphi\left(x,a^{\prime\prime},f\left(c\right)\right)$ be a
$\mathcal{L}_{Af\left(c\right)}$-formula with $a^{\prime\prime}$
a finite tuple from $A$ such that $\mathcal{M}\models\varphi\left(f\left(b\right),a^{\prime\prime},f\left(c\right)\right)$
and $\varphi\left(\mathcal{M},a^{\prime\prime},f\left(c\right)\right)\subseteq G$.
We will see that $\varphi\left(G,a^{\prime\prime},f\left(c\right)\right)$
is generic in $G$. As $f\left(b\right)\models\varphi\left(x,a^{\prime\prime},f\left(c\right)\right)$
if and only if $b\models\varphi\left(x,a^{\prime\prime},c\right)$,
then $\varphi\left(G,a^{\prime\prime},c\right)$ is generic in $G$.
From this it is easy to see that $\varphi\left(G,a^{\prime\prime},f\left(c\right)\right)$
is also generic in $G$. Thus $f\left(b\right)$ is group-generic
of $G$ over $Af\left(c\right)$.
\end{proof}

Now, as $\textrm{tp}^{\mathcal{M}}\left(c^{\prime}/Ab\right)=\textrm{tp}^{\mathcal{M}}\left(c/Ab\right)$,
then there is $f\in Aut\left(\mathcal{M}/Ab\right)$ such that $f\left(c\right)=c^{\prime}$.
Since $b\in G$ group-generic of $G$ over $Ac$, the above claim
yields $b=f\left(b\right)$ is group-generic of $G$ over $Ac^{\prime}$.

(v) By (ii), there is $c_{1}$ a tuple from $M$ such that $\textrm{tp}^{\mathcal{M}}\left(c_{1}/A\right)=\textrm{tp}^{\mathcal{M}}\left(c/A\right)$
and $b$ is group-generic of $G$ over $Ac_{1}$. Again by (ii), there
is $c^{\prime}$ a tuple from $M$ such that $\textrm{tp}^{\mathcal{M}}\left(c^{\prime}/Ab\right)=\textrm{tp}^{\mathcal{M}}\left(c_{1}/Ab\right)$
and $a$ is group-generic of $G$ over $Abc^{\prime}$. And by (iv),
$b$ is group-generic of $G$ over $Ac^{\prime}$.

(vi) Let $c$ be a generic element of $G$ over $A$. By (v), there is $c^{\prime}$
a tuple from $M$ such that $\textrm{tp}^{\mathcal{M}}\left(c^{\prime}/A\right)=\textrm{tp}^{\mathcal{M}}\left(c/A\right)$,
and $b$ and $a$ are group-generic of $G$ over $Ac^{\prime}$ and
$Abc^{\prime}$, respectively. Since $\textrm{tp}^{\mathcal{M}}\left(c^{\prime}/A\right)=\textrm{tp}^{\mathcal{M}}\left(c/A\right)$,
$a\forkindep[Ab]c^{\prime}$, and $b\forkindep[A]c^{\prime}$, then
$c^{\prime}$ is generic of $G$ over $Aab$.

\end{proof}

\begin{rem}\label{R:ggACL}

Let $G$ be a group definable over $A\subseteq M$. If $a$ is group-generic
of $G$ over $A$, then $a$ is group-generic of $G$ over $\textrm{acl}{}^{\mathcal{M}}\left(A\right)$.

\end{rem}

\begin{proof}

Let $c\in\textrm{acl}{}^{\mathcal{M}}\left(A\right)$, and assume
that $a\models\varphi\left(x,c\right)$ for some $\mathcal{L}_{c}$-formula
$\varphi\left(x,c\right)$ that defines a subset of $G$. We will
see that $\varphi\left(G,c\right)$ is generic in $G$. Since $c\in\textrm{acl}{}^{\mathcal{M}}\left(A\right)$
and $\mathcal{M}$ is an ordered structure, $\textrm{acl}{}^{\mathcal{M}}=\textrm{dcl}{}^{\mathcal{M}}$.
Then there is $c^{\prime}\in A$ such that $\gamma\left(\mathcal{M},c^{\prime}\right)=\left\{ c\right\} $.

Thus, if $\phi\left(x,c^{\prime}\right)=\left(\exists!z\right)\left(\varphi\left(x,z\right)\wedge\gamma\left(z,c^{\prime}\right)\right)$,
then $a\models\phi\left(x,c^{\prime}\right)$.

Since $a$ is group-generic of $G$ over $A$, $\phi\left(\mathcal{M},c^{\prime}\right)$
is generic in $G$. Therefore, there are $g_{1},\ldots,g_{n}\in G$
such that for every $g\in G$ there is $g^{\prime}\models\phi\left(\mathcal{M},c^{\prime}\right)$
such that $g=g_{i}\cdot g^{\prime}$ for some $i\in\left\{ 1,\ldots,n\right\} $.
But $\gamma\left(\mathcal{M},c^{\prime}\right)=\left\{ c\right\} ,$then
$g^{\prime}\models\varphi\left(x,c\right)$. Thus $\varphi\left(G,c\right)$
is generic in $G$.

\end{proof}

\begin{rem}\label{R:}
If $G$ is a group definable over $A\subseteq M$ and $a\in G$ is
generic of $G$ over $A$, then $a$ need not be group-generic
of $G$ over $A$. For instance, consider a sufficiently saturated
real closed field $\mathcal{R}=\left(R,<,+,0,\cdot,1\right)$ and
its additive group $G=\left(R,+\right)$. By saturation, there is
$\alpha\in\bigcap_{n\in\mathbb{N}}\left(0,\frac{1}{n}\right)$. Since
$\dim\left(\alpha/\emptyset\right)=tr.deg\left(\mathbb{Q}\left(\alpha\right):\mathbb{Q}\right)=1=\dim\left(G\right)$,
then $\alpha$ is generic of $G$ over $\emptyset$, but $\alpha$
is not group-generic of $G$ over $\emptyset$ since $\alpha\in\left(0,1\right)$
and $\left(0,1\right)$ cannot cover $G$ by finitely many group translates.
\end{rem}

\begin{rem}\label{R:}
Let $X\subseteq M^{n}$ definable over $A\subseteq M$. If $b\in X$
is generic of $X$ over $A$ and $a\in X$ is generic of $X$ over
$Ab$, then $b$ is generic of $X$ over $Aa$.
\end{rem}
\begin{proof}
Since $a$ is generic of $X$ over $Ab$, $a\forkindep[A]b$. By the
symmetry of the independence (Fact \ref{F:AlgDimProper}(iii)), $b\forkindep[A]a$,
so $b$ is generic of $X$ over $Aa$.
\end{proof}

\begin{rem}\label{R:RemGenPt}
If $G$ is a group definable over $A\subseteq M$, $b\in G$ is group-generic
of $G$ over $A$ and $a\in G$ is group-generic of $G$ over $Ab$,
then $b$ need not be group-generic of $G$ over $Aa$. For
instance, consider a sufficiently saturated real closed field $\mathcal{R}=\left(R,<,+,0,\cdot,1\right)$.
Let $\left[0,1\right)\subseteq R$ and $G=\left(\left[0,1\right),+_{mod\, 1}\right)$.

We will find $a,b\in G$ such that $b$ and $a$ are group-generic
of $G$ over $A$ and $Ab$, respectively, but with $b$ not group-generic
of $G$ over $Aa$.

Let $\varphi$ be a $\mathcal{L}_{A}$-formula that defines $G$.

Let
\[
\mathcal{I}_{A}=\left\{ \psi:\psi\textrm{ is a \ensuremath{\mathcal{L}_{A}}-formula and \ensuremath{\psi\left(\mathcal{R}\right)\subseteq G} is nongeneric in }G\right\} .
\]
Let $\theta_{n}\left(x\right)=0<x<\frac{1}{n}$ for $n\in\mathbb{N}\setminus\left\{ 0\right\} $,
and let
\[
\Gamma_{A}=\left\{ \lnot\psi\land\varphi,\theta_{n}:\psi\in\mathcal{I}_{A},n\in\mathbb{N}\setminus\left\{ 0\right\} \right\}.
\]

$\Gamma_{A}$ is a partial type because if $\left(\bigwedge_{i<k_{1}}\lnot\psi_{i}\land\varphi\wedge\bigwedge_{i<k_{2}}\theta_{j}\right)\left(\mathcal{R}\right)=\emptyset$,
then $\left(\bigwedge_{i<k_{2}}\theta_{j}\right)\left(\mathcal{R}\right)\subseteq\left(\bigvee_{i<k_{1}}\psi_{i}\right)\left(\mathcal{R}\right)$,
but the finite union of nongeneric definable subsets in $G$ is nongeneric,
so $\left(\bigvee_{i<k_{1}}\psi_{i}\right)\left(\mathcal{R}\right)$
cannot contain the generic set $\left(\bigwedge_{i<k_{2}}\theta_{j}\right)\left(\mathcal{R}\right)$,
thus $\Gamma_{A}$ is a partial type. Moreover, $\Gamma_{A}$ is generic
in $G$ because every formula in $\Gamma_{A}$ defines a generic subset
in $G$.

Now, let $p\in S^{\mathcal{R}}\left(A\right)$ with $p\supseteq\Gamma_{A}$,
then $p$ is also generic in $G$, otherwise if there is $\phi\in p$
defining a nongeneric subset in $G$, then $\lnot\phi\land\varphi,\phi\in p$,
but this is a contradiction.

By saturation, there is $b\models p$, thus $b$ is group-generic
of $G$ over $A$.

Following, we will show the existence of a group-generic point $a$
of $G$ over $Ab$ that is also a positive infinitesimal, but $b$
is not group-generic of $G$ over $Aa$.

Let
\[
\mathcal{I}_{Ab}=\left\{ \psi:\psi\textrm{ is a \ensuremath{\mathcal{L}_{Ab}}-formula and \ensuremath{\psi\left(\mathcal{R}\right)\subseteq G} is nongeneric in }G\right\} .
\]

Let
\[
\Gamma_{Ab}=\left\{ \lnot\psi\land\varphi,\theta_{n}:\psi\in\mathcal{I}_{Ab},n\in\mathbb{N}\setminus\left\{ 0\right\} \right\}
\]
 with $\theta_{n}\left(x\right)=0<x<\frac{1}{n}$ for $n\in\mathbb{N}\setminus\left\{ 0\right\} $
as above.

As before, $\Gamma_{Ab}$ is a generic partial type in $G$ and if
$q\in S^{\mathcal{R}}\left(Ab\right)$ with $q\supseteq\Gamma_{Ab}$,
then $q$ is generic in $G$.

By saturation, there is $a\models q$, thus $a$ is group-generic
of $G$ over $Ab$.

Notice that $0<b<a$, otherwise if $a<b$ , then $a\in\left(0,b\right)$
and since $\left(0,b\right)$ is an $Ab$-definable subset of $G$,
the group-genericity of $a$ implies that $\left(0,b\right)$ is generic
in $G$, but this is not possible since $b$ is infinitesimal. Therefore,
$b\in\left(0,a\right)$, which is an $Aa$-definable interval of infinitesimal
length, then $\left(0,a\right)$ cannot be generic in $G$, so $b$
is not group-generic in $G$ over $Aa$. Thus we finish Remark \ref{R:RemGenPt}.
\end{rem}

\begin{fact}\label{F:abisGen}\cite[Lemma 2.1]{Pi}
Let $G$ be a group definable over $A\subseteq M$. Let $a,b\in G$, then:
\begin{enumerate}[(i)]

\item If $a$ is generic of $G$ over $Ab$, then
$ab$ is generic of $G$ over $Ab$.

\item There are $b_{1},b_{2}\in G$ generic of
$G$ over $Ab$ such that $b=b_{1}b_{2}$.
\end{enumerate}
\end{fact}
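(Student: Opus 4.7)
The plan is to derive both parts from the additivity formula of Fact~\ref{F:AlgDimProper}(ii), combined with the elementary observation that in a group, given the parameter $b$, the element $a$ and the product $ab$ are mutually $\mathrm{dcl}$-definable (since $a=(ab)\cdot b^{-1}$ and $ab$ is obtained from $a$ by right multiplication by $b$).

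For part (i), I would apply additivity to the tuple $(a,ab)$ over $Ab$ in both orders:
\[
\dim(a/Ab,ab)+\dim(ab/Ab)\;=\;\dim(a,ab/Ab)\;=\;\dim(ab/Ab,a)+\dim(a/Ab).
\]
By the interdefinability over $Ab$ just noted, both $\dim(a/Ab,ab)$ and $\dim(ab/Ab,a)$ vanish, so $\dim(ab/Ab)=\dim(a/Ab)=\dim(G)$, which is exactly the claim that $ab$ is generic of $G$ over $Ab$.

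For part (ii), I would first invoke sufficient saturation: since $|Ab|<\kappa$, the remark after Def.~\ref{D:Indep} provides some $b_{1}\in G$ generic of $G$ over $Ab$. Setting $b_{2}\coloneqq b_{1}^{-1}b$ automatically gives the factorization $b=b_{1}b_{2}$, and it remains only to verify that $b_{2}$ is generic of $G$ over $Ab$. But $b\in\mathrm{dcl}(Ab)$ makes $b_{1}$ and $b_{2}$ mutually $Ab$-definable (via $b_{2}=b_{1}^{-1}b$ and $b_{1}=bb_{2}^{-1}$), so the same two-way additivity computation as in (i), now applied to the pair $(b_{1},b_{2})$ over $Ab$, yields $\dim(b_{2}/Ab)=\dim(b_{1}/Ab)=\dim(G)$.

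The whole argument is routine once one has identified the right pair of tuples to apply additivity to; there is no real obstacle. The only minor point to be careful about is keeping $b$ fixed as a parameter throughout, so that $b^{-1}$ lies in $\mathrm{dcl}(Ab)$ and the interdefinability arguments take place over the base $Ab$ rather than a larger set.
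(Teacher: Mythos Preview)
Your argument is correct. The paper does not actually prove Fact~\ref{F:abisGen}; it is quoted verbatim from \cite[Lemma~2.1]{Pi} and used as a black box, so there is no in-paper proof to compare against. Your derivation from additivity and $Ab$-interdefinability is the standard one and matches the spirit of the paper's proof of the group-generic analogue (Claim~\ref{C:abIsGpGen}), where part~(i) is proved by translating an $Ab$-definable set by $b^{-1}$ and part~(ii) by writing $b=b_{1}(b_{1}^{-1}b)$ and invoking~(i). One cosmetic remark: in (i) you do not really need the two-sided additivity display; since $a$ and $ab$ are mutually in $\mathrm{dcl}$ over $Ab$, $\dim(ab/Ab)=\dim(a/Ab)$ follows immediately.
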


\begin{claim}\label{C:abIsGpGen}
Let $G$ be a group definable over $A\subseteq M$. Let $a,b\in G$, then:

\begin{enumerate}[(i)]

\item If $a$ is group-generic of $G$ over $Ab$,
then $ab$ is group-generic of $G$ over $Ab$.

\item If there is a group-generic point of $G$ over
$Ab$, then there are $b_{1},b_{2}\in G$ group-generic of $G$ over
$Ab$ such that $b=b_{1}b_{2}$.

\end{enumerate}

\end{claim}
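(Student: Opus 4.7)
The plan is to mimic the proof of Fact \ref{F:abisGen} (Pillay's Lemma 2.1), replacing the notion of genericity of points (which rests on dimension) by genericity of definable sets in $G$ (which rests on being covered by finitely many translates). The two facts I will use throughout are:

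\textbf{(a)} If $Y\subseteq G$ is $B$-definable and $g\in \mathrm{dcl}^{\mathcal{M}}(B)$, then $gY$, $Yg$, and $Y^{-1}$ are all $B$-definable.

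\textbf{(b)} Genericity of definable subsets of $G$ is invariant under (left or right) translation by a single element of $G$ and under the inversion map: if $g_1Y\cup\dots\cup g_nY=G$ then $g_1(Yg)\cdot g^{-1}=G$, so $g_1(Yg),\dots,g_n(Yg)$ still cover $G$; and taking inverses of a finite left-cover yields a finite right-cover of $Y^{-1}$ and vice-versa.

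For part (i), fix an $Ab$-definable set $X\subseteq G$ with $ab\in X$. Then $Xb^{-1}$ is $Ab$-definable (since $b\in Ab$) and contains $a$. Group-genericity of $a$ over $Ab$ forces $Xb^{-1}$ to be generic (both left and right) in $G$; by (b) so is $X=(Xb^{-1})\cdot b$. Hence $ab$ is group-generic of $G$ over $Ab$.

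For part (ii), let $c\in G$ be a group-generic point of $G$ over $Ab$, which exists by hypothesis. First note that $c^{-1}$ is again group-generic of $G$ over $Ab$: if $Y$ is $Ab$-definable with $c^{-1}\in Y$, then $Y^{-1}$ is $Ab$-definable and contains $c$, hence $Y^{-1}$ is generic, and by (b) so is $Y$. Now set
\[
b_{1}\coloneqq c,\qquad b_{2}\coloneqq c^{-1}b,
\]
so that $b_{1}b_{2}=b$. By construction $b_{1}$ is group-generic of $G$ over $Ab$, and applying part (i) to the group-generic point $c^{-1}$ of $G$ over $Ab$ shows that $b_{2}=c^{-1}b$ is group-generic of $G$ over $Ab$ as well.

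There is no real obstacle here beyond carefully tracking that all the auxiliary sets $Xb^{-1}$, $Y^{-1}$, $c^{-1}b$ remain definable over the intended parameter set $Ab$ (which is automatic because $b$ is already in the base), and that the witness of translation-invariance of genericity yields left- as well as right-genericity. The only conceptual point is the reduction of (ii) to (i) via the existence of the group-generic point, which replaces the unconditional existence statement of Fact \ref{F:abisGen}(ii) and is the reason an existence hypothesis appears in the statement.
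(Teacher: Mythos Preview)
Your proof is correct and follows exactly the same approach as the paper's: for (i) you pull $X$ back by right-translation by $b^{-1}$ and use group-genericity of $a$, and for (ii) you take $b_1$ to be the given group-generic point, observe $b_1^{-1}$ is still group-generic, and apply (i) to get $b_2=b_1^{-1}b$. The only difference is that you spell out the translation- and inversion-invariance of genericity (your facts (a) and (b)) more explicitly than the paper does.
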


\begin{proof}

(i) We will show that $ab$ is group-generic of $G$ over $Ab$, so
let $X\subseteq G$ $Ab$-definable with $ab\in X$. Since $a\in Xb^{-1}$ and $a$ is group-generic of $G$ over $Ab$, then $Xb^{-1}$ is generic in $G$, so is $X$.

(ii) By hypothesis, there is a group-generic point $b_{1}$ of $G$
over $Ab$. Since $b_{1}^{-1}$ is also group-generic of $G$ over
$Ab$, (i) implies $b_{2}=b_{1}^{-1}b$ is group-generic of $G$ over
$Ab$, and $b=b_{1}\left(b_{1}^{-1}b\right)$.

\end{proof}

\subsection{Generic sets in the product group}\label{S:3.4}

In this subsection we prove some properties of generic definable subsets
of the product group $G\times G$ for a definably compact group $G$. Lemmas \ref{L:FubiniZgen} and \ref{L:ExistdeCaja} will be used in the proof of Theorem \ref{T:TA*}.

We first recall the notion of a Keisler measure on $G$ (which exists
by \cite[Thm. 7.7]{HPNIP2}), and a Fubini (or symmetry) Theorem (\cite[Prop. 7.5]{HPNIP2}).

\begin{dfn}\label{D:Keisler}
Let $X\subseteq M^{n}$ definable.
\begin{enumerate}[(i)]

\item A \textit{(global) Keisler measure} $\mu$ on $X$ is a finitely
additive probability measure on $Def\left(X\right)$ (the set of all
definable subsets of $X$); i.e., a map $\mu:Def\left(X\right)\rightarrow\left[0,1\right]\subseteq M$
such that $\mu\left(\emptyset\right)=0$, $\mu\left(X\right)=1$,
and for $X_{1},X_{2}\in Def\left(X\right)$, $\mu\left(X_{1}\cup X_{2}\right)=\mu\left(X_{1}\right)+\mu\left(X_{2}\right)-\mu\left(X_{1}\cap X_{2}\right)$.

\item If $\mu$ is a Keisler measure on a definable group $G$, $\mu$
is \textit{left (right) invariant} if $\mu\left(gY\right)=\mu\left(Y\right)$
($\mu\left(Yg\right)=\mu\left(Y\right)$) for every $g\in G$ and
$Y\in Def\left(G\right)$. $\mu$ is called \textit{generic} if: $\mu\left(Y\right)>0$
if and only if $Y\in Def\left(G\right)$ and $Y$ is generic in $G$.

\end{enumerate}

\end{dfn}

Note that if $\mu_{1}$ and $\mu_{2}$ are two Keisler measures on
a definable group $G$, we can define a Keisler measure $\mu_{1}\otimes\mu_{2}$
on $G\times G$ as follows: for a definable set $D\subseteq G\times G$,
$\mu_{1}\otimes\mu_{2}\left(D\right)=\int\mu_{1}\left(D_{y}\right)d\mu_{2}$
where $D_{y}=\left\{ x\in G:\left(x,y\right)\in D\right\}$. Note that function $y\mapsto\mu_{1}\left(D_{y}\right)$ is integrable with respect to $\mu_{2}$ (\cite{HPNIP2}).

The next fact gathers Proposition 7.5 and Theorem 7.7 in \cite{HPNIP2}
for the case of a definably compact group.

\begin{fact}\label{F:}\cite{HPNIP2}
Let $G$ be a definably connected definably compact group definable
in $\mathcal{M}$. Then
\begin{enumerate}[(i)]
\item $G$ has a unique left invariant (global) Keisler measure,
which is also the unique right invariant global Keisler measure on
$G$. This measure is also generic.

\item If $\mu$ and $\lambda$ are two left invariant (global) Keisler
measures on $G$, then $\mu\otimes\lambda=\lambda\otimes\mu$ and
is also left invariant.

\end{enumerate}

\end{fact}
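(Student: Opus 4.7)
The plan is to exploit the finitely satisfiable generics (fsg) property that definably compact definably connected groups in o-minimal expansions of real closed fields are known to enjoy, and then apply the general machinery for Keisler measures on fsg groups in NIP theories. I would carry out the steps in the order (a) existence of a left-invariant global Keisler measure $\mu$, (b) the Fubini/symmetry statement $\mu\otimes\lambda=\lambda\otimes\mu$ of part (ii), and (c) uniqueness, the coincidence of left- and right-invariant measures, and the genericity characterization of part (i).

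For (a), I would start from Fact \ref{F:ExistOfGenTypes}(iii), which produces a complete generic type $p$ on $G$ over a small parameter set. Using the proper ideal of nongeneric definable sets (Fact \ref{F:ExistOfGenTypes}(i)) together with a compactness argument in the space of global Keisler measures on $G$, one averages left-translates of $p$ and then of the resulting measure by $G$-elements; the fsg property guarantees that the averaging procedure converges (in the weak-$*$ sense) to a genuinely left-invariant global Keisler measure $\mu$, rather than to a partial measure. For (b), the crucial input is the Fubini/symmetry theorem for Keisler measures in NIP theories applied to generically stable measures: because $G$ is fsg, every left-invariant Keisler measure on $G$ is generically stable, and therefore any two such measures commute under tensor product; left-invariance of $\mu\otimes\lambda$ on $G\times G$ then follows formally, since left-translation by $g\in G$ on the first coordinate fixes each slice-measure $\mu(D_y)$ and hence fixes integration against $\lambda$.

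For (c), given two left-invariant Keisler measures $\mu,\lambda$ and a definable $X\subseteq G$, I would apply $\mu\otimes\lambda$ and $\lambda\otimes\mu$ to the definable set $D=\{(x,y)\in G\times G : xy\in X\}$. Left-invariance of $\mu$ yields $\mu(D_y)=\mu(y^{-1}X)=\mu(X)$ for every $y$, so one iterated integral equals $\mu(X)$; by the symmetry in (b) the other iterated integral equals $\int\lambda(x^{-1}X)\,d\mu(x)=\lambda(X)$, and therefore $\mu(X)=\lambda(X)$. The inversion map $x\mapsto x^{-1}$ pushes a left-invariant measure to a right-invariant one, so the same uniqueness identifies the left- and right-invariant measures. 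Finally, if $X$ is generic then finitely many left-translates $g_1X,\dots,g_nX$ cover $G$, and $1=\mu(G)\leq\sum_{i=1}^{n}\mu(g_iX)=n\mu(X)$ forces $\mu(X)>0$; conversely $\mu(X)>0$ precludes $X$ being nongeneric, since the nongeneric ideal is proper. The main obstacle is step (b): the commutation of tensor products is the substantive NIP input, and in the present o-minimal setting over a real closed field it is underwritten by Fact \ref{F:T3.7}, which controls which definable sets lie in the nongeneric ideal and thereby secures the fsg/generic stability prerequisite.
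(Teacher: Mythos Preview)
The paper does not actually prove this statement: it is recorded as a \emph{Fact}, explicitly attributed to \cite{HPNIP2} (gathering Proposition~7.5 and Theorem~7.7 there), and is used as a black box in the proof of Lemma~\ref{L:FubiniZgen}. So there is no ``paper's own proof'' to compare against; your sketch is in effect a reconstruction of the Hrushovski--Pillay argument, and in outline it is the right one: fsg $\Rightarrow$ existence of an invariant measure, generic stability $\Rightarrow$ Fubini/symmetry for the product, and then uniqueness is deduced from symmetry.

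Two points in part~(c) deserve tightening. First, with $D=\{(x,y):xy\in X\}$ one has $D_y=\{x:xy\in X\}=Xy^{-1}$, not $y^{-1}X$; left-invariance of $\mu$ does \emph{not} give $\mu(Xy^{-1})=\mu(X)$. The clean version is to take $D=\{(g,h):g^{-1}h\in X\}$, so that $D^g=gX$ and $D_h=hX^{-1}$; Fubini then yields $\lambda(X)=\mu(X^{-1})$ for any two left-invariant measures $\mu,\lambda$. Specializing to $\lambda=\mu$ gives $\mu(X)=\mu(X^{-1})$, and feeding this back gives $\lambda(X)=\mu(X)$. Second, for the genericity characterization, the sentence ``$\mu(X)>0$ precludes $X$ being nongeneric, since the nongeneric ideal is proper'' is not an argument: properness of the ideal does not by itself force nongeneric sets to have measure zero. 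What is actually used is that the invariant measure is an average of (translates of) global generic types, and every nongeneric definable set is omitted by every generic type, hence has measure zero; alternatively one invokes compact domination. Your forward direction (generic $\Rightarrow$ positive measure) via finite subadditivity is fine.
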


\begin{lem}\label{L:FubiniZgen}
Let $G$ be a definably connected definably compact group definable in $\mathcal{M}$. Let $Z\subseteq G\times G$ be a definable set. For each $y\in G$, let $Z_{y}=\left\{ x\in G:\left(x,y\right)\in Z\right\}$, $Z_{gen}=\left\{ y\in G:Z_{y}\,\textrm{is generic in }G\right\}$, and $Z_{n}=\left\{ y\in G:Z_{y}\,\textrm{is}\, n\textrm{-generic in}\, G\right\} $. Then $Z$ is generic in $G\times G$ if and only if $Z_{gen}$ is generic in $G$ if and only if there is $n\in\mathbb{N}$ such that $Z_{n}$ is generic in $G$.
\end{lem}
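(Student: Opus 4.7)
The plan is to prove the three-way equivalence by cycling through $Z$ generic in $G\times G \Rightarrow Z_{gen}$ generic in $G \Rightarrow (\exists n)\, Z_{n}$ generic in $G \Rightarrow Z$ generic in $G\times G$. I expect the first implication to be the most interesting step, as it has to convert a global finite cover of $Z$ in $G\times G$ into slice-wise genericity of $Z_{y}$ in $G$; this is driven by the ideal property of non-generic definable subsets.

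For the first implication, by saturation $Z$ is $k$-generic for some $k$, so $G\times G=\bigcup_{i=1}^{k}(a_{i},b_{i})Z$. Slicing at any $y\in G$ and using that $(x,y)\in (a_{i},b_{i})Z$ iff $x\in a_{i}Z_{b_{i}^{-1}y}$, I obtain $G=\bigcup_{i=1}^{k}a_{i}Z_{b_{i}^{-1}y}$. If none of the $Z_{b_{i}^{-1}y}$ were generic, then by left-translation invariance of genericity together with the ideal property of non-generic definable subsets in a definably compact definably connected group (Fact~\ref{F:ExistOfGenTypes}(i)), the finite union on the right would be non-generic, contradicting that it equals $G$. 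Hence some $b_{i}^{-1}y\in Z_{gen}$, i.e.\ $y\in b_{i}Z_{gen}$, and so $G=\bigcup_{i=1}^{k}b_{i}Z_{gen}$.

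For the second implication, an $n$-generic set is automatically $(n+1)$-generic, so $Z_{n}\subseteq Z_{n+1}$ and $Z_{gen}=\bigcup_{n}Z_{n}$. If $G\subseteq\bigcup_{j=1}^{l}g_{j}Z_{gen}$, then the decreasing family of definable sets $\{G\setminus\bigcup_{j=1}^{l}g_{j}Z_{n}\}_{n}$ has empty intersection, and $\kappa$-saturation of $\mathcal{M}$ forces some $N$ with $G=\bigcup_{j=1}^{l}g_{j}Z_{N}$.

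For the third implication, let $\mu$ be the unique left-invariant Keisler measure on $G$ from Fact~\ref{F:}(i), which is generic. An $n$-generic definable $D\subseteq G$ has $\mu(D)\geq 1/n$, since $1=\mu(G)\leq\sum_{i=1}^{n}\mu(g_{i}D)=n\mu(D)$ by left-invariance; hence for $y\in Z_{N}$, $\mu(Z_{y})\geq 1/N$. Since $G\times G$ is also definably compact and definably connected, its unique invariant generic Keisler measure must be $\mu\otimes\mu$, so $Z$ is generic in $G\times G$ iff $\mu\otimes\mu(Z)>0$. Unfolding the definition of $\mu\otimes\mu$ and bounding below on $Z_{N}$ yields
\[
\mu\otimes\mu(Z)=\int_{G}\mu(Z_{y})\,d\mu(y)\geq\int_{Z_{N}}(1/N)\,d\mu=\mu(Z_{N})/N>0.
\]
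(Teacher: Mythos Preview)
Your proof is correct, and your cycle of implications closes cleanly. The argument differs from the paper's in a meaningful way, though.

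The paper handles both directions of ``$Z$ generic in $G\times G\Leftrightarrow(\exists n)\,Z_{n}$ generic in $G$'' entirely through the Keisler measure: it expands $\mu\otimes\mu(Z)$ as a sum of iterated integrals over the disjoint pieces $Z_{n}'=Z_{n}\setminus Z_{n-1}$, applies the Fubini symmetry \cite[Prop.~7.5]{HPNIP2}, and extracts the equivalence $\mu\otimes\mu(Z)>0\Leftrightarrow(\exists n)\,\mu(Z_{n}')>0$ from a chain of inequalities involving auxiliary fibers $Z^{x,n}$. Your route is more asymmetric but considerably lighter. For $Z$ generic $\Rightarrow Z_{gen}$ generic you avoid the measure altogether: slicing a finite cover of $G\times G$ at height $y$ and invoking only the ideal property of non-generic definable sets (Fact~\ref{F:ExistOfGenTypes}(i)) is enough to force $y\in\bigcup_{i}b_{i}Z_{gen}$. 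For the reverse direction you use the measure, but only the trivial lower bound $\mu(Z_{y})\geq 1/N$ on $Z_{N}$ and one integral estimate, bypassing the Fubini decomposition entirely. What the paper's approach buys is a single unified computation; what yours buys is that the forward implication becomes elementary (it uses nothing beyond \cite{PePi07}) and the backward implication needs only the crudest consequence of invariance. One small remark: in your second implication you tacitly assume a \emph{finite} cover of $G$ by translates of $Z_{gen}$; this is justified because your first step actually produces one, but as a stand-alone implication one should note that $Z_{gen}=\bigcup_{n}Z_{n}$ is a small directed union of definable sets, so saturation reduces any $<\kappa$ cover to a finite one with a uniform $n$.
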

\begin{proof}
First, observe that for every $i\in\mathbb{N}$ $Z_{i}\subseteq Z_{i+1}$, $Z_{i}$ is definable, and $Z_{gen}=\bigcup_{n\in\mathbb{N}}Z_{n}$. Let $Z_{1}^{\prime}=Z_{1}$, and $Z_{n}^{\prime}=Z_{n}\setminus Z_{n-1}$ for $n\geq2$. Then $Z_{gen}=\dot{\bigcup}_{n\in\mathbb{N}}Z_{n}^{\prime}$. By saturation, $Z_{gen}$ is generic in $G$ if only if there is $n\in\mathbb{N}$ such that $Z_{n}$ is generic in $G$ if only if there is $n\in\mathbb{N}$ such that $Z_{n}^{\prime}$ is generic in $G$.

Second, by Theorem 7.7 in \cite{HPNIP2}, $G$ has a unique generic left invariant (global) Keisler measure $\mu$. Since the Keisler measure $\mu$ is left invariant, so is the Keisler measure $\mu\otimes\mu$ on $G\times G$. Thus again by \cite[Theorem 7.7]{HPNIP2}, $\mu\otimes\mu$ is the unique left invariant Keisler measure on $G\times G$.

For a set $X$ we denote by $\bm{1}_{X}$ the indicator function of $X$; namely, $\bm{1}_{X}\left(a\right)=1$ if $a\in X$ and $\bm{1}_{X}\left(a\right)=0$ if $a\notin X$.

\begin{align*}
\mu\otimes\mu\left(Z\right) & =  \int_{G}\mu\left(Z_{y}\right)d\mu(y)\\  & =  \sum_{n\in\mathbb{N}}\int_{Z_{n}^{\prime}}\mu\left(Z_{y}\right)d\mu(y)\\  & =  \sum_{n\in\mathbb{N}}\int_{Z_{n}^{\prime}}\left(\int_{G}\bm{1}_{Z_{y}}\left(x\right)d\mu(x)\right)d\mu(y)\\  & =  \sum_{n\in\mathbb{N}}\int_{Z_{n}^{\prime}}\left(\int_{G}\bm{1}_{Z}\left(x,y\right)d\mu(x)\right)d\mu(y).
\end{align*}

By Proposition 7.5 in \cite{HPNIP2},
\begin{align*}
\int_{Z_{n}^{\prime}}\left(\int_{G}\bm{1}_{Z}\left(x,y\right)d\mu(x)\right)d\mu(y) & =  \int_{G}\left(\int_{Z_{n}^{\prime}}\bm{1}_{Z}\left(x,y\right)d\mu(y)\right)d\mu(x).
\end{align*}

Then,
\begin{align*}
 \mu\otimes\mu\left(Z\right) & = \sum_{n\in\mathbb{N}}\int_{G}\left(\int_{Z_{n}^{\prime}}\bm{1}_{Z}\left(x,y\right)d\mu(y)\right)d\mu(x)\\
 & = \sum_{n\in\mathbb{N}}\int_{G}\mu\left(Z^{x,n}\right)d\mu(x),\end{align*}  where $Z^{x,n}=\left\{y\in Z_{n}^{\prime}:\left(x,y\right)\in Z\right\}$.

Note that $Z^{x,n}\subseteq Z_{n}^{\prime}$ and $\mu\left(Z_{n}^{\prime}\right)\leq\int_{G}\mu\left(Z^{x,n}\right)d\mu(x)$, then for every $x\in G$,
\[ \mu\left(Z^{x,n}\right)\leq\mu\left(Z_{n}^{\prime}\right)\leq\int_{G}\mu\left(Z^{x,n}\right)d\mu(x).\]

From the above equations, we have that $\mu\otimes\mu\left(Z\right)>0$ $\Leftrightarrow$ $\left(\exists n\in\mathbb{N}\right)\left(\int_{G}\mu\left(Z^{x,n}\right)d\mu(x)>0\right)$ $\Leftrightarrow$ $\left(\exists n\in\mathbb{N}\right)\left(\exists x\in G\right)\left(\mu\left(Z^{x,n}\right)>0\right)$ $\Leftrightarrow$ $\left(\exists n\in\mathbb{N}\right)\left(\mu\left(Z_{n}^{\prime}\right)>0\right)$. Thus, $Z$ is generic in $G\times G$ if and only if there is $n\in\mathbb{N}$ such that $Z_{n}^{\prime}$ is generic in $G$, which is equivalent to $Z_{gen}$ is generic in $G$ by the first part of this proof.
\end{proof}

Observe that an analogous result can be proved in the same way for fibers of elements in the first component of $G\times G$.

\begin{cor}\label{C:(a,b)GpGeneric}
Let $G$ be a definably connected definably compact group definable
in $\mathcal{M}$. If $b\in G$ is group-generic of $G$ over $A$
and $a\in G$ is group-generic of $G$ over $Ab$, then $\left(a,b\right)\in G\times G$
is group-generic of $G\times G$ over $A$.

\end{cor}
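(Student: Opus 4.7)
The plan is to apply Lemma \ref{L:FubiniZgen} directly. Given an arbitrary $A$-definable $Z\subseteq G\times G$ containing $(a,b)$, I want to conclude that $Z$ is generic in $G\times G$. Since $G\times G$ is definably connected and definably compact, the left and right generics coincide, so establishing genericity in one sense suffices for group-genericity of $(a,b)$.

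The key observation is the fiber analysis. Consider the fiber $Z_b=\{x\in G:(x,b)\in Z\}$; this is $Ab$-definable and contains $a$. Since $a$ is group-generic of $G$ over $Ab$, the set $Z_b$ must be generic in $G$, hence $n$-generic for some $n\in\mathbb{N}$. Now form the set $Z_n=\{y\in G:Z_y\textrm{ is }n\textrm{-generic in }G\}$ as in Lemma \ref{L:FubiniZgen}. The crucial point is that $Z_n$ is $A$-definable, because the property ``$Z_y$ is $n$-generic'' is first-order expressible over $A$: it is the existence of $g_1,\ldots,g_n\in G$ with $G=\bigcup_{i\leq n}g_iZ_y$. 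We have $b\in Z_n$ by the previous step, and since $b$ is group-generic of $G$ over $A$, $Z_n$ is generic in $G$. By Lemma \ref{L:FubiniZgen}, $Z$ is generic in $G\times G$.

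Since $Z$ was an arbitrary $A$-definable subset of $G\times G$ containing $(a,b)$, this shows $(a,b)$ is group-generic of $G\times G$ over $A$.

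The only subtle point, which I would view as the main obstacle to a beginner, is verifying the $A$-definability of $Z_n$: one must unpack that $n$-genericity is captured by an existential formula over $A$, so transferring genericity of the fiber $Z_b$ through group-genericity of $b$ requires the right ``definable'' witness set, and $Z_n$ (rather than the non-definable $Z_{gen}$) is the correct choice. Everything else is a direct chaining of the hypotheses through Lemma \ref{L:FubiniZgen}.
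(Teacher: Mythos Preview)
Your proof is correct and follows essentially the same route as the paper: take an arbitrary $A$-definable $Z\subseteq G\times G$ containing $(a,b)$, use group-genericity of $a$ over $Ab$ to get that the fiber $Z_b$ is generic, pass to the $A$-definable set $Z_n$ for a suitable $n$, use group-genericity of $b$ over $A$ to make $Z_n$ generic, and invoke Lemma~\ref{L:FubiniZgen}. Your explicit remark on why $Z_n$ is $A$-definable is a welcome clarification that the paper leaves implicit.
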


\begin{proof}

Let $Z\subseteq G\times G$ $A$-definable with $\left(a,b\right)\in Z$. Since $a\in Z_{b}=\left\{ x\in G:\left(x,b\right)\in Z\right\}$,
which is $Ab$-definable, then $Z_{b}$ is generic in $G$, then $b\in Z_{gen}=\bigcup_{n\in\mathbb{N}}Z_{n}$,
where $Z_{n}=\left\{ y\in G:Z_{y}\,\textrm{is}\, n\textrm{-generic in}\, G\right\}$. Therefore, there is $n\in\mathbb{N}$ such that $b\in Z_{n}$.
As $Z_{n}$ is $A$-definable and $b$ is group-generic of $G$ over
$A$, then $Z_{n}$ is generic in $G$. By Lemma \ref{L:FubiniZgen},
this is equivalent to $Z$ is generic in $G\times G$. Then $\left(a,b\right)\in G\times G$
is group-generic of $G\times G$ over $A$.
\end{proof}

\begin{rem}\label{R:}
Let $X\subseteq M^{n}$ definable over $A\subseteq M$. If $b\in X$
is generic of $X$ over $A$ and $a\in X$ is generic of $X$ over
$Ab$, then $\left(a,b\right)\in X\times X$ is generic of $X\times X$ over $A$.
\end{rem}
\begin{proof}
It follows directly from the additivity property of the (acl-)dimension (Fact \ref{F:AlgDimProper}(ii)).
\end{proof}

Now, we will show that for a definably connected definably compact
group $G$ any definable generic set in $G\times G$ contains a definable generic box.

Recall that every Hausdorff locally compact group $G$ carries a natural
measure called the \textit{Haar measure}. A \textit{left Haar measure} $\textbf{m}$
on $G$ is a measure on the Borel algebra (namely, the $\sigma$-algebra
generated by all open sets of $G$) that is left invariant (i.e.,
$\textbf{m}\left(gX\right)=\textbf{m}\left(X\right)$ for every $g\in G$
and Borel set $X$), finite on every compact subset of $G$, and positive
for every non-empty open subset of $G$. By Haar's Theorem, $G$ has,
up to a positive multiplicative constant, a unique nontrivial left
Haar measure. If in addition $G$ is compact, then their left Haar
measures coincide with their right Haar measures, and since $\textbf{m}\left(G\right)<\infty$,
we can naturally choose a normalized Haar measure on $G$; namely,
$\textbf{m}\left(G\right)=1$.

\begin{dfn}\label{D:}
Let $G$ be a type-definable group. $G$ is \textit{compactly dominated}
by $\left(H,\textbf{m},\pi\right)$, where $H$ is a compact group,
$\textbf{m}$ is the unique normalized Haar measure on $H$ and $\pi:G\rightarrow H$
is a surjective group homomorphism, if for any definable $Y\subseteq G$
and for every $c\in H$ outside a set of $\textbf{m}$ measure zero,
either $\pi^{-1}\left(c\right)\subseteq Y$ or $\pi^{-1}\left(c\right)\subseteq G\setminus Y$;
namely,

\[
\textbf{m}\left(\left\{ c\in H:\pi^{-1}\left(c\right)\cap Y=\emptyset\textrm{ and }\pi^{-1}\left(c\right)\cap\left(G\setminus Y\right)=\emptyset\right\} \right)=0.
\]
\end{dfn}

For what follows, we recall that given a $\bigvee$-definable group $\mathcal{U}=\bigcup_{i\in I}Z_{i}$ such that $\mathcal{U}^{00}$ exists we can endow the quotient group $\mathcal{U}/\mathcal{U}^{00}$ with a topology, called the \textit{logic topology}, as follows: let
$\pi:\mathcal{U}\rightarrow\mathcal{U}/\mathcal{U}^{00}$ be the canonical
projection map and set $C\subseteq\mathcal{U}/\mathcal{U}^{00}$ to
be closed if and only if for every $i\in I\ensuremath{,}\pi^{-1}\left(C\right)\cap Z_{i}$
is type-definable. Then, by \cite[Lemma 7.5]{HPePiNIP}, these closed sets generate a locally compact
topology on $\mathcal{U}/\mathcal{U}^{00}$ making it into a Hausdorff topological
group.

\begin{fact}\label{F:}\cite{HPP2011,HPNIP2}
Let $G$ be a definably connected definably compact group definable
in $\mathcal{M}$, then $G$ is compactly dominated by $\left(G/G^{00},\textbf{m},\pi\right)$
where $m$ is the Haar measure of the compact group $G/G^{00}$ with its logic topology, and $\pi:G\rightarrow G/G^{00}$ is the canonical surjective homomorphism.
\end{fact}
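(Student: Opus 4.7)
The plan is to construct the Haar measure on $G/G^{00}$ from the unique invariant Keisler measure on $G$ and then use that identification to control the $\pi$-fibres. Let $\mu$ denote the unique left- and right-invariant (global) Keisler measure on $G$ provided by the preceding fact. I would first extend $\mu$ so that it is defined on $\pi^{-1}(C)$ for every Borel $C \subseteq G/G^{00}$: for $C$ closed in the logic topology, $\pi^{-1}(C)$ is type-definable, so setting $\mu(\pi^{-1}(C)) := \inf\{\mu(D) : D \supseteq \pi^{-1}(C),\ D \text{ definable}\}$ and then extending by outer regularity gives a consistent Borel probability measure. The pushforward $\pi_*\mu$ is a Borel probability measure on the compact Hausdorff group $G/G^{00}$, and left invariance of $\mu$ together with the homomorphism property of $\pi$ forces $\pi_*\mu$ to be left invariant; Haar's uniqueness theorem then yields $\pi_*\mu = \mathbf{m}$.

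Given a definable $Y \subseteq G$, set
\[
Y_* = \{c \in G/G^{00} : \pi^{-1}(c) \subseteq Y\}, \qquad Y^* = \{c \in G/G^{00} : \pi^{-1}(c) \cap Y \neq \emptyset\},
\]
so that the set whose measure must vanish is $B_Y := Y^* \setminus Y_*$. Since $\pi^{-1}(Y_*) \subseteq Y \subseteq \pi^{-1}(Y^*)$ and $\pi_*\mu = \mathbf{m}$, the sandwich $\mathbf{m}(Y_*) \le \mu(Y) \le \mathbf{m}(Y^*)$ holds automatically, and the desired conclusion $\mathbf{m}(B_Y) = 0$ is equivalent to $\mathbf{m}(Y_*) = \mathbf{m}(Y^*)$.

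To force that equality I would prove that $\mu$ is invariant under left translation by elements of $G^{00}$ in the measure-theoretic sense that $\mu(gY \setminus Y) = \mu(Y \setminus gY) = 0$ for every $g \in G^{00}$ and every definable $Y$. Combined with a Fubini-style argument in the spirit of Lemma \ref{L:FubiniZgen}, applied to the definable relation $\{(x,g) \in G \times G : x \in (gY \setminus Y) \cup (Y \setminus gY)\}$, this would show that $\mathbf{1}_Y$ is constant on $\mu$-almost every $G^{00}$-coset, which after pushing forward is exactly $\mathbf{m}(B_Y) = 0$. The required $G^{00}$-invariance of $\mu$ is extracted from \emph{generic stability} of $\mu$: because $\mu$ coincides with its own left and right translates and (by the previous fact) $\mu \otimes \lambda = \lambda \otimes \mu$ for any left invariant $\lambda$, $\mu$ is definable over a small parameter set and finitely satisfiable in it; generic stability then transfers this to invariance under any element whose every definable neighbourhood is generic, which is precisely the characterisation of $G^{00}$.

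The main obstacle is the generic stability of $\mu$: without it, neither the $G^{00}$-invariance nor the Fubini swap can be justified for an arbitrary definable $Y$. This is exactly where definable compactness of $G$ is essential, as Example \ref{E:Exldgps}(iii) shows that in the non-compact setting even $G^{00}$ can fail to exist, let alone carry an invariant measure with these rigidity properties. In the NIP/o-minimal framework generic stability of the unique invariant measure is the core content of the fsg (finitely satisfiable generics) program developed in \cite{HPP2011, HPNIP2}; once it is granted, the remaining measure-theoretic half of the argument reduces to the uniqueness of the Haar measure and the standard outer regularity of Borel measures on compact Hausdorff groups.
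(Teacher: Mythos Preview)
The paper does not prove this statement at all: it is recorded as a \emph{Fact} and attributed to \cite{HPP2011,HPNIP2} with no argument given. So there is no ``paper's own proof'' to compare your sketch against; the result is imported wholesale from the literature.

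As for the sketch itself, the overall architecture (push the invariant Keisler measure forward through $\pi$ to get $\mathbf m$, then show the boundary set $B_Y$ is $\mathbf m$-null) is the right picture, but the key step is not established. Left invariance of $\mu$ only gives $\mu(gY\setminus Y)=\mu(Y\setminus gY)$, not that each is zero; the claim that these vanish for every $g\in G^{00}$ is essentially compact domination in disguise, so invoking it here is circular. Your proposed justification via generic stability does not close the gap: fsg/generic stability yields that $\mu$ is definable and both left and right invariant, but it does not by itself force $\mu(gY\triangle Y)=0$ for $g\in G^{00}$. The description of $G^{00}$ as ``the elements whose every definable neighbourhood is generic'' is also not correct---$G^{00}$ is the smallest type-definable subgroup of bounded index (equivalently, under fsg, the stabiliser of the generic ideal), which is a different condition. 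Finally, the appeal to Lemma~\ref{L:FubiniZgen} is not legitimate as stated, since that lemma concerns definable $Z\subseteq G\times G$ while you need $g$ to range over the type-definable set $G^{00}$.

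The actual arguments in \cite{HPP2011,HPNIP2} are considerably more delicate: in the o-minimal case they pass through the solution of Pillay's conjecture (so that $G/G^{00}$ is a compact \emph{Lie} group of the correct dimension) together with an analysis of smooth/Borel measures and a domination statement proved first in the abelian/semisimple building blocks. None of that structure is visible in the sketch, and without it the vanishing of $\mathbf m(B_Y)$ remains unproved.
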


\begin{lem}\label{L:ExistdeCaja}
Let $G$ be a definably connected definably compact group definable in $\mathcal{M}$. Let $Z$ be a definable generic subset in $G\times G$. Then there are definable sets $A,B\subseteq G$ generic in $G$ such that $A\times B\subseteq Z$.
\end{lem}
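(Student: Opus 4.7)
The plan is to combine o-minimal cell decomposition with the measure-theoretic Fubini of Lemma \ref{L:FubiniZgen} and compact domination: cell-decompose $Z$, locate a top-dimensional open cell $C \subseteq Z$ of positive measure, and then use the continuity of the cell-defining functions to carve a product box $A \times B \subseteq C$ with both factors definable, open and generic.

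First I would cell-decompose $Z$ inside the ambient space $M^{2d}$ (where $d = \dim G$), writing $Z = \bigsqcup_{i=1}^{N} C_i$. Since $Z$ is generic in $G \times G$ we have $\mu \otimes \mu(Z) > 0$, where $\mu$ is the unique left-invariant Keisler measure on $G$; finite additivity produces some cell $C \subseteq Z$ with $\mu \otimes \mu(C) > 0$. Cells of dimension strictly less than $2d$ carry no $\mu \otimes \mu$-mass (by compact domination of $G \times G$ by the compact group $(G/G^{00})^{2}$, under which lower-dimensional definable sets have Haar-null image), so $C$ must be top-dimensional, hence open in $M^{2d}$ and in $G \times G$.

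Next I would apply Lemma \ref{L:FubiniZgen} to $C$: the set of $x \in G$ with $\mu(C_x) > 0$ is generic, so pick such an $x_0$. The fiber $C_{x_0}$ is an open subset of $G$ of positive Keisler measure; using its iterated-interval cell structure together with the positivity of $\mu(C_{x_0})$, carve an open sub-box $B \subseteq C_{x_0}$ of positive Keisler measure whose closure is contained in $C_{x_0}$. By continuity of the definable functions defining the boundaries of $C$, a sufficiently small definable open-box neighborhood $A$ of $x_0$ in $G$ (of positive Keisler measure) satisfies $B \subseteq C_x$ for every $x \in A$. Then $A \times B \subseteq C \subseteq Z$, and both $A$ and $B$ are definable open subsets of $G$ of positive Keisler measure, hence generic.

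The main technical obstacle is ensuring that $A$ and $B$ are genuinely generic rather than merely open: in a sufficiently saturated setting, definable open sets of infinitesimal size exist and carry zero Keisler measure. This is resolved by the positivity of $\mu \otimes \mu(C)$, which via Fubini forces $C$ to have positive-measure fibers on a positive-measure set of base points, guaranteeing non-infinitesimal thickness in both coordinates --- exactly what the iterative choice of open sub-intervals of non-infinitesimal length requires in order to deliver generic $A$ and $B$.
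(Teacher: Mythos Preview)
Your approach is genuinely different from the paper's, and the gap is real. The paper does not touch cell decomposition at all: it applies \cite[Proposition~2.1]{Berar09} directly to $G\times G$, obtaining a translate $Z\cdot\overline{g}$ that contains $(G\times G)^{00}=G^{00}\times G^{00}$. Since this type-definable subgroup is \emph{already a product}, saturation immediately yields definable $A^{*},B^{*}\supseteq G^{00}$ with $A^{*}\times B^{*}\subseteq Z\cdot\overline{g}$, and translating back gives generic $A,B$ with $A\times B\subseteq Z$. The whole argument is three lines.

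In your outline the crucial unjustified step is the passage from ``$B\subseteq C_{x_0}$ with $\mu(B)>0$'' to ``there is $A\ni x_0$ with $\mu(A)>0$ and $B\subseteq C_x$ for every $x\in A$''. Continuity of the cell-boundary functions produces \emph{some} open neighbourhood $A$ of $x_0$, but in a saturated model that $A$ may have infinitesimal diameter and hence zero Keisler measure --- you cannot simply declare it to be ``of positive Keisler measure'' as you do. Your final paragraph asserts that Fubini forces ``non-infinitesimal thickness in both coordinates'', but this conflates two different statements: Lemma~\ref{L:FubiniZgen} tells you that $C_x$ is generic for a generic set of $x$'s, not that a \emph{single fixed} generic $B$ lies inside $C_x$ for a generic set of $x$'s. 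The latter implication is essentially the content of the lemma you are proving, so the argument becomes circular at that point. A secondary issue is that even your step~4 --- locating a generic product-of-intervals box inside the generic open cell $C_{x_0}$ --- is not clear from the cell structure alone; the natural way to do it is again ``a generic definable set contains a coset of $G^{00}$'', at which point the cell decomposition is no longer contributing anything and you are back to the paper's argument.
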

\begin{proof}
By \cite[Theorem 10.1]{HPP2011}, $G$ is compactly dominated by $\left(G/G^{00},\textrm{\textbf{m}},\pi\right)$ where $\textrm{\textbf{m}}$ is the Haar measure of $G/G^{00}$, and $\pi:G\rightarrow G/G^{00}$ is the canonical surjective homomorphism. And also $G\times G$ is compactly dominated by $\left(G/G^{00}\times G/G^{00},\textrm{\textbf{m}}^{\prime},\overline{\pi}\right)$ where $\textrm{\textbf{m}}^{\prime}$ is the Haar measure of $G/G^{00}\times G/G^{00}$, and $\overline{\pi}:G\times G\rightarrow G/G^{00}\times G/G^{00}$ is $\overline{\pi}=\left(\pi,\pi\right)$; i.e., $\overline{\pi}\left(x,y\right)=\left(\pi\left(x\right),\pi\left(y\right)\right)$. Note that on $\left(G\times G\right)/\left(G^{00}\times G^{00}\right)$, which can be set-theoretically identified with $G/G^{00}\times G/G^{00}$, the logic topology corresponds to the product topology on $G/G^{00}\times G/G^{00}$.

By \cite[Proposition 2.1]{Berar09}, there is $\overline{g}=\left(g_{1},g_{2}\right)\in G\times G$ such that $G^{00}\times G^{00}\subseteq Z\cdot\overline{g}$. Since $G^{00}\times G^{00}$ is a type-definable set and $Z\cdot\overline{g}$ is definable, saturation yields that there are definable $A^{*},B^{*}\subseteq G$ such that $G^{00}\subseteq A^{*},B^{*}$ and $A^{*}\times B^{*}\subseteq Z\cdot\overline{g}$. Let $A=A^{*}\cdot g_{1}^{-1}$ and $B=B^{*}\cdot g_{2}^{-1}$, then $A\times B\subseteq Z$ and $A,B$ are both definable and generic in $G$. This ends the proof of the lemma. \end{proof}

\section{A group configuration proposition for definably compact groups}\label{S:4}

\textit{From now until the end of this paper assume that $\mathcal{R}=\left(R,<,+,\cdot\right)$ is a sufficiently saturated real closed field.}

For the proof of the next proposition we will adapt the notion of
geometric structure and substructure given in \cite[Chapter 2]{HP1}.
Therefore, for the real closed field $\mathcal{R}$,
if $\mathcal{L}=\left\{ +,\cdot\right\} $, then the algebraic closure
$D=R\left(\sqrt{-1}\right)$ of $R$ is a geometric structure, and $R$ viewed as an $\mathcal{L}$-structure
is a geometric substructure of $D$ and therefore satisfies the following:

\begin{enumerate}[(i)]

\item the algebraic closures of $A\subseteq R$ in $\mathcal{R}$
in the model-theoretic and algebraic senses coincide, and for every
$A\subseteq R$ the algebraic closure of $A$ in $R$ in the sense
of the $\mathcal{L}$-structure $R$ is precisely $R\cap\textrm{acl}{}^{D}\left(A\right)$,

\item $R$ is definably closed in $D$; that is, if $b\in D$ and
$b\in\textrm{acl}{}^{D}\left(R\right)$, then $b\in R$, and

\item for each $\mathcal{L}$-formula $\varphi\left(x,y\right)$
there is some $N<\omega$ such that any model $R_{1}$ of $Th\left(R\right)$ and
every $b\in R_{1}$, if $\varphi\left(x,y\right)$ defines a finite
subset of $R_{1}$, then it defines a set with at most $N$ elements.

\end{enumerate}

We also adapt the same notation of Hrushovski and Pillay in \cite{HP1},
and we recall it below.

\begin{notation}\label{N:NotaD-R}

Let $A\subseteq R$ and $a$ a finite tuple from $R$. $\textrm{tp}\left(a/A\right)$
denotes $\textrm{tp}^{\mathcal{R}}\left(a/A\right)$, $\textrm{dcl}\left(A\right)$
denotes $\textrm{dcl}{}^{\mathcal{R}}\left(A\right)$.

$\textrm{qftp}\left(a/A\right)$ denotes $\textrm{qftp}^{D}\left(a/A\right)$
that is the set of quantifier-free $\mathcal{L}_{A}$-formulas satisfied
by $a$ in $D$. $\textrm{qfdcl}\left(A\right)$ denotes $\textrm{qfdcl}^{D}\left(A\right)$
that is the set of elements of $D$ definable over $A$ by quantifier-free
formulas, but since $R$ is definably closed in $D$, so $\textrm{qfdcl}^{D}\left(A\right)\subseteq R$.
Note that since $D$ has quantifier elimination, $\textrm{qfdcl}^{D}\left(A\right)=\textrm{dcl}^{D}\left(A\right)$.
Finally, by $\textrm{acl}\left(A\right)$ we denote $\textrm{acl}^{D}\left(A\right)$.

\end{notation}

Recall that a group $H$ definable in an algebraically closed field $D$ is $D$-\textit{definably connected} if there is no proper nontrivial $D$-definable subgroup of $H$ of finite index.

\begin{prop}\label{P:3.1*}
Let $D$ the algebraic closure of $\mathcal{R}$. Let $G$ be a definably compact definably connected group definable in $\mathcal{R}$. Then there are a finite subset $A\subseteq R$ over which $G$ is defined, a $D$-definably connected group $H$ definable in $D$ over $A$, points $a,b,c$ of $G$ and points $a^{\prime},b^{\prime},c^{\prime}$ of $H\left(R\right)$ such that
\begin{enumerate}[(i)]
\item $a\cdot b=c$ (in G) and $a^{\prime}\cdot b^{\prime}=c^{\prime}$ (in H),
\item $\textrm{acl}\left(aA\right)=\textrm{acl}\left(a^{\prime}A\right)$, $\textrm{acl}\left(bA\right)=\textrm{acl}\left(b^{\prime}A\right)$ and $\textrm{acl}\left(cA\right)=\textrm{acl}\left(c^{\prime}A\right)$,
\item $b$ is a group-generic point of $G$ over $A$, $a$ is a group-generic point of $G$ over $Ab$,
\item $a^{\prime}$ and $b^{\prime}$ are generic points of $H\left(R\right)$ over $A$ and are independent with each other over $A$.
\end{enumerate}
Note that $a^{\prime}$ and $b^{\prime}$ are only generic and not group-generic.
\end{prop}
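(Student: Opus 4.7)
The plan is to adapt the proof of \cite[Prop.~3.1]{HP1} by replacing the use of generic points in $G$ with group-generic points throughout, exploiting the existence of group-generic points for definably compact definably connected groups provided by Proposition \ref{P:SupL1}.

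First, I would fix a finite $A_{0}\subseteq R$ over which $G$ is defined, and use Proposition \ref{P:SupL1}(i) to choose $b\in G$ group-generic of $G$ over $A_{0}$. Using Corollary \ref{C:SupL2} to move group-generic types to the larger parameter set $A_{0}b$, I pick $a\in G$ group-generic of $G$ over $A_{0}b$, and set $c:=a\cdot b$. By Claim \ref{C:abIsGpGen}(i), $c$ is group-generic of $G$ over $A_{0}b$; applied symmetrically to $a=c b^{-1}$ and $b=a^{-1}c$, this yields that every pair of $\{a,b,c\}$ is mutually group-generic over $A_{0}$, and each element is determined by the other two. This is a group configuration in the strong, group-generic sense.

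Second, since group-generic points are in particular generic (Remark after Def.~\ref{D:Gp-generic}), the triple $(a,b,c)$ satisfies the hypotheses of the generic group configuration argument in \cite[Prop.~3.1]{HP1}. That proof is a Weil-style birational group chunk construction carried out in the algebraically closed overfield $D=R(\sqrt{-1})$: one recognises the local multiplication of $G$ near $(a,b)$ as a rational map defined over some finite $A\supseteq A_{0}$ in $R$, and then reconstructs from this rational data a $D$-definably connected algebraic group $H$ defined over $A$, together with points $a',b',c'\in H(R)$ with $a'\cdot b'=c'$ satisfying the algebraic closure equalities $\textrm{acl}(aA)=\textrm{acl}(a'A)$, $\textrm{acl}(bA)=\textrm{acl}(b'A)$, $\textrm{acl}(cA)=\textrm{acl}(c'A)$ of clause (ii). I would invoke this geometric construction verbatim: it is unchanged by our strengthening from ``generic'' to ``group-generic''. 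Clause (iv) then follows, since genericity and mutual independence of $a',b'$ in $H(R)$ are transferred from the analogous properties of $a,b$ in $G$ through the algebraic closure equalities of (ii).

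The main obstacle, and the reason some care is needed, is the bookkeeping of parameters: the HP1 construction may enlarge the initial parameter set from $A_{0}$ to some finite $A\supseteq A_{0}$ over which $H$ and the relevant rational maps are defined, and a priori $b,a$ need only be group-generic over $A_{0}$, not over $A$. I would handle this by a standard automorphism/extension argument: apply Corollary \ref{C:SupL2}(v)--(vi) to replace $(b,a)$ by a conjugate pair $(b^{*},a^{*})$ having the same type over $A_{0}$ (so that the HP1 construction still applies) but with $b^{*}$ group-generic over $A$ and $a^{*}$ group-generic over $Ab^{*}$, thereby preserving (iii) after enlarging the parameter set. As the statement itself flags, one cannot strengthen (iv) to group-genericity of $a',b'$ in $H(R)$, since $H(R)$ is in general not definably compact and Proposition \ref{P:SupL1} does not apply.
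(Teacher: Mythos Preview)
Your overall strategy matches the paper's: pick $b$ group-generic over a finite $A_0$, then $a$ group-generic over $A_0 b$, set $c=ab$, and run the construction of \cite[Prop.~3.1]{HP1}. The genuine gap is in how you handle the parameter enlargement.

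You propose to treat \cite[Prop.~3.1]{HP1} as a black box, obtain the final parameter set $A\supseteq A_0$, and then invoke Corollary~\ref{C:SupL2}(v)--(vi) once at the end to replace $(b,a)$ by a pair $(b^*,a^*)$ with the same type over $A_0$ that is group-generic over $A$. This is circular. The HP1 argument does not enlarge $A_0$ to some canonical $A$: it introduces auxiliary elements (in the paper's notation $x',z_1,\sigma_1,b_2$) chosen relative to the specific $(a,b)$, and the group $H$ together with the witnesses $a',b',c'$ are built from those choices. An automorphism over $A_0$ carrying $(b,a)$ to $(b^*,a^*)$ moves $A$ to some other $A^*$; the HP1 conclusion for $(b^*,a^*)$ then lives over $A^*$, while your group-genericity is arranged over $A$. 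The dual manoeuvre---keeping $a,b$ fixed and moving the auxiliary parameters to a tuple with the same type over $A_0$---fails for the symmetric reason: each auxiliary element must realise a prescribed type over the \emph{current} base and be independent from $\{a,b\}$ over that base, and neither is preserved under conjugation over $A_0$ alone.

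The paper's resolution is to interleave rather than defer: one opens the HP1 proof and, at each of the four stages where a new parameter is introduced, uses Corollary~\ref{C:SupL2}(v),(vi) \emph{at that stage} to select the new parameter so that it still meets the HP1 requirements (the right type over the current base plus independence from $\{a,b\}$) while simultaneously keeping $b$ group-generic over the enlarged base and $a$ group-generic over that base together with $b$. This works precisely because the HP1 constraints on each new parameter concern only its type over the current base and its independence from $\{a,b\}$, and the latter follows automatically once $a,b$ are group-generic over the base augmented by the new parameter. So the bookkeeping cannot be postponed to a single cleanup step. (A separate minor point: your assertion that ``every pair of $\{a,b,c\}$ is mutually group-generic over $A_0$'' does not follow from Claim~\ref{C:abIsGpGen}; group-genericity is not symmetric in this sense, as Remark~\ref{R:RemGenPt} shows. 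Fortunately the proposition only needs the asymmetric version you start with.)
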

\begin{proof}
This proof is essentially the same as that of \cite[Proposition 3.1]{HP1}, what
is new is that we have to prove that the points $a$ and $b$ introduced
below remain group-generic of $G$ over each of the sets of parameters
defined by Hrushovski and Pillay in their proof. To achieve this we
summarize without proof the unmodified parts of the proof of \cite[Proposition 3.1]{HP1}
and just focuses on the new parts. We refer the reader to \cite{HP1}
for appropriate model-theoretic background.

The first part of the proof of Proposition 3.1 in \cite{HP1} is devoted
to yield a set-up in which \cite[Proposition 1.8.1]{HP1} can be applied, and
get with this the existence of the connected group $H$ definable
in $D$ mentioned in the conclusion of Proposition 3.1. This is done
through a series of lemmas and observations.

Let us start with a finite subset $A_0$ of $R$ over which $G$ and its group operation are defined. Let $\dim\left(G\right)=n$. By Proposition \ref{P:SupL1}(i), there is $b\in G$ group-generic of $G$ over $A_0$. By Prop. \ref{P:SupL1}(ii) and saturation, there is $a\in G$ group-generic of $G$ over $A_{0}\cup\left\{b\right\}$, then $a\forkindep[A_0]b$. Let $c=a\cdot b$, then, by Claim \ref{C:abIsGpGen}, $c$ is group-generic of $G$ over $A_{0}\cup\left\{b\right\}$. And also $\dim\left(a,b,c/A_{0}\right)=\dim\left(a,b/A_{0}\right)=2n$.

In $\mathcal{R}$, $c\in\textrm{dcl}\left(a,b,A_{0}\right)$ and $b\in\textrm{dcl}\left(a,c,A_{0}\right)$. Thus we start with three group-generic points of $G$ such that each two of them are independent (over some set of parameters) and define the third in $\mathcal{R}$. As Hrushovski and Pillay point out in \cite{HP1}, the key is to modify those points by points in $R$ such that two of them define the third in the structure $D$; namely, that dcl is replaced by qfdcl in order to lay the foundations to apply \cite[Prop. 1.8.1]{HP1}.

\begin{lem}\label{L:3.2*}
There are a finite subset $A_{2}$ of $R$, containing $A_{0}$, and tuples $a_{1},b_{1,},c_{1}$ in $R$ such that
\begin{enumerate}[(i)]
\item $b$ and $a$ are group-generic of $G$ over $A_{2}$ and $A_{2}b$, respectively,
\item $\textrm{acl}\left(a,A_{2}\right)=\textrm{acl}\left(a_{1},A_{2}\right)$, $\textrm{acl}\left(b,A_{2}\right)=\textrm{acl}\left(b_{1},A_{2}\right)$, $\textrm{acl}\left(c,A_{2}\right)=\textrm{acl}\left(c_{1},A_{2}\right)$,
\item $b_{1}\in\textrm{qfdcl}\left(a_{1},c_{1},A_{2}\right)$, and $c_{1}\in\textrm{qfdcl}\left(a_{1},b_{1},A_{2}\right)$.
\end{enumerate}
\end{lem}

\begin{proof}
The only thing we need to prove here is the existence of elements $x^{\prime}$, $z_1$ from $R$ satisfying the same conditions of the $x^{\prime}$ and $z_1$ of Hrushovski and Pillay in their original proof of \cite[Lemma 3.2]{HP1} such that  $b$ and $a$ are group-generic of $G$ over $A_{2}=A_{0}x^{\prime}z_1$ and $A_{2}b$, respectively, because the rest of the proof is exactly the same as that of \cite[Lemma 3.2]{HP1}.

\begin{claim}\label{C:}

\begin{enumerate}[(i)]

\item There is a generic point $x^{\prime}$ of $G$ over $A_{0}ab$
such that if $A_{1}=A_{0}x^{\prime}$, then $b$ is group-generic
of $G$ over $A_{1}$ and $a$ is group-generic of $G$ over $A_{1}b$.

\item Consider $A_{1}$ as in (i), then there is a generic point
$z_{1}$ of $G$ over $A_{1}ab$ such that if $A_{2}=A_{1}z_{1}$,
then $b$ is group-generic of $G$ over $A_{2}$ and $a$ is group-generic
of $G$ over $A_{2}b$.

\end{enumerate}

\end{claim}

\begin{proof}

(i) Since b is group-generic of $G$ over $A_{0}$ and $a$ is group-generic
of $G$ over $A_{0}b$, then Corollary \ref{C:SupL2}(vi) yields the
existence of a generic element $x^{\prime}$ of $G$ over $A_{0}ab$
such that $b$ is group-generic of $G$ over $A_{0}x^{\prime}$ and
$a$ is group-generic of $G$ over $A_{0}x^{\prime}b$.

(ii) From the conclusion of (i) and Corollary \ref{C:SupL2}(vi), we
get the existence of a generic point $z_{1}$ of $G$ over $A_{1}ab$
such that $b$ is group-generic of $G$ over $A_{1}z_{1}$ and $a$
is group-generic of $G$ over $A_{1}z_{1}b$.

\end{proof}
This ends the proof of Lemma \ref{L:3.2*}.
\end{proof}

Now, let $a_{1},b_{1},c_{1}$ and $A_{2}$ be as given by Lemma \ref{L:3.2*}, and $A=\textrm{acl}\left(A_{2}\right)\cap R$. Therefore, $a_{1},b_{1},c_{1}$ each have dimension $n$ over $A$. Since $\textrm{acl}\left(A_{2}\right)\cap R=\textrm{acl}{}^{\mathcal{R}}\left(A_{2}\right)$, then Remark \ref{R:ggACL} implies that $b$ and $a$ are also group-generic of $G$ over $A$ and $Ab$, respectively.

\cite[Remark 3.3]{HP1} yields that $\textrm{qftp}\left(b_{1},c_{1}/A,a_{1}\right)$
is stationary, and hence we can define the canonical base $\sigma$
of $\textrm{qftp}\left(b_{1},c_{1}/A,a_{1}\right)$. Then $\sigma\in\textrm{qfdcl}\left(A,a_{1}\right)$.
Since $R$ is definably closed in $D$, $\sigma\in R$.

Let \[
r=\textrm{qftp}\left(\sigma/A\right),\; q_{1}=\textrm{qftp}\left(b_{1}/A\right),\;\textrm{and}\; q_{2}=\textrm{qftp}\left(c_{1}/A\right),
\] then $\dim\left(q_{1}\right)=\dim\left(q_{2}\right)=n$.

By Remark 3.4 in \cite{HP1}, $r$ is stationary, $\dim\left(r\right)=n$,
$\sigma\forkindep[A]b_{1}$, $\sigma\forkindep[A]c_{1}$, $b_{1}\in\textrm{qfdcl}\left(\sigma,c_{1},A\right)$,
and $c_{1}\in\textrm{qfdcl}\left(\sigma,b_{1},A\right)$. Therefore,
there is some $A$-definable partial function in the sense of $D$,
say $\mu$, such that $c_{1}=\mu\left(\sigma,b_{1}\right)$. And note
that whenever $\sigma^{\prime}\models r$ and $b_{1}^{\prime}\models q_{1}$
with $\sigma^{\prime}\forkindep[A]b_{1}^{\prime}$, then $\mu\left(\sigma^{\prime},b_{1}^{\prime}\right)$
is well-defined, realises $q_{2}$ and is independent with each of
$\sigma^{\prime},b_{1}^{\prime}$ over $A$. Similarly, as $b_{1}\in\textrm{qfdcl}\left(\sigma,c_{1},A\right)$,
there is some $A$-definable partial function in the sense of $D$,
say $\upsilon$, such that $b_{1}=\upsilon\left(\sigma,c_{1}\right)$.
And note that whenever $\sigma^{\prime}\models r$ and $c_{1}^{\prime}\models q_{2}$
with $\sigma^{\prime}\forkindep[A]c_{1}^{\prime}$, then $\upsilon\left(\sigma^{\prime},c_{1}^{\prime}\right)$
is well-defined, realises $q_{1}$ and is independent with each of
$\sigma^{\prime},c_{1}^{\prime}$ over $A$.

Now, let $\sigma_{1},\sigma_{2}\models r$ with $\sigma_{1}\forkindep[A]\sigma_{2}$ and $\sigma_{1},\sigma_{2}\in R$. Let $b_{2}\models q_{1}$ such that $b_{2}\forkindep[A]\left\{ \sigma_{1},\sigma_{2}\right\}$ and $b_{2}\in R$.

Then $\mu\left(\sigma_{1},b_{2}\right)$ is defined, realises $q_{2}$,
and is independent with $\sigma_{2}$ over $A$. Therefore, $\upsilon\left(\sigma_{2},\mu\left(\sigma_{1},b_{2}\right)\right)$
is defined and realises $q_{1}$. Denote $\upsilon\left(\sigma_{2},\mu\left(\sigma_{1},b_{2}\right)\right)$
by $b_{3}$.

By Remark 3.6 in \cite{HP1}, $b_{3}\in\textrm{qfdcl}\left(\sigma_{1},\sigma_{2},b_{2},A\right)$,
$b_{2}\in\textrm{qfdcl}\left(\sigma_{1},\sigma_{2},b_{3},A\right)$,
each of $b_{2},b_{3}$ is independent with $\left\{ \sigma_{1},\sigma_{2}\right\} $
over $A$, and $\textrm{qftp}\left(b_{2},b_{3}/\sigma_{1},\sigma_{2},A\right)$
is stationary. Then we can define the canonical base of $\textrm{qftp}\left(b_{2},b_{3}/\sigma_{1},\sigma_{2},A\right)$
and denote it by $\tau$. Then $\tau\in\textrm{qfdcl}\left(\sigma_{1},\sigma_{2},A\right)$, so $\tau\in R$.

Let $s=\textrm{qftp}\left(\tau/A\right)$. By \cite[Lemma 3.7]{HP1}, $\dim\left(s\right)=n$. As was proved in Remark 3.4 in \cite{HP1}, we have that $b_{3}\in\textrm{qfdcl}\left(\tau,b_{2},A\right)$,
$b_{2}\in\textrm{qfdcl}\left(\tau,b_{3},A\right)$; moreover, $\tau$
is independent with each of $b_{2},b_{3}$ over $A$. Therefore, there
is some $A$-definable partial function $\mu^{\prime}$ in the sense
of $D$ such that $b_{3}=\mu^{\prime}\left(\tau,b_{2}\right)$, and
whenever $\tau^{\prime}\models s$ and $b_{1}^{\prime}\models q_{1}$
with $\tau^{\prime}\forkindep[A]b_{1}^{\prime}$, then $\mu^{\prime}\left(\tau^{\prime},b_{1}^{\prime}\right)$
is well-defined and realises $q_{1}$.

At this stage Hrushovski and Pillay obtain two $n$-dimensional stationary types $s$
and $q_{1}$ over $A$ that satisfy the hypothesis of \cite[Proposition 1.8.1]{HP1}.
Moreover, the functions $f$ and $g$, which are quantifier-free definable in $D$ over $A$, in the hypothesis of Prop. 1.8.1 in \cite{HP1} correspond to the functions $f$ in \cite[Lemma 3.8]{HP1} and $\mu^{\prime}$, respectively.

Next comes the application of \cite[Prop. 1.8.1]{HP1}. Let $H$, $X$, $h_1$, and $h_2$ as given by Prop. 1.8.1 in \cite{HP1}. We can assume that $h_1,h_2$ are both the identity function. Thus $H$ is a connected group definable in $D$ over $A$ with generic type $s$, $X$ is a set definable in $D$ over $A$ with generic type $q_1$, and there is a transitive group action $\Lambda:H\times X\rightarrow X:\left(h,x\right)\mapsto\Lambda\left(h,x\right)$, which is also definable in $D$ over $A$.

Note that since $\tau\in H\left(R\right)$, $\tau\models s$, and $\mathcal{R}$ viewed as an $\left\{ +,\cdot\right\}$- structure is a geometric substructure of $D$, then $\dim\left(H\left(R\right)\right)=n$.
Similarly, from $b_{1}\in X\left(R\right)$ and $b_{1}\models q_{1}$,
we have $\dim\left(X\left(R\right)\right)=n$.

Moreover, we have the following:

\begin{enumerate}[(i)]

\item for $\tau_{1},\tau_{2}\models s$ with $\tau_{1}\forkindep[A]\tau_{2}$,
the product $\tau_{1}\cdot\tau_{2}$ in the group $H$ is exactly
$f\left(\tau_{1},\tau_{2}\right)$, and

\item for any $\tau\models s$ and $b\models q_{1}$ with $\tau\forkindep[A]b$,
$\Lambda\left(\tau,b\right)$ is exactly $\mu^{\prime}\left(\tau,b\right)$.

\end{enumerate}

Since $\tau\in\textrm{qfdcl}\left(\sigma_{1},\sigma_{2},A\right)$,
there is some $A$-definable partial function $\xi$ in the sense
of $D$ such that $\tau=\xi\left(\sigma_{1},\sigma_{2}\right)$. And
note that whenever $\sigma_{1}^{\prime},\sigma_{2}^{\prime}\models r$
with $\sigma_{1}^{\prime}\forkindep[A]\sigma_{2}^{\prime}$, then
$\xi\left(\sigma_{1}^{\prime},\sigma_{2}^{\prime}\right)$ is well-defined,
realises $s$ and is independent with each of $\sigma_{1}^{\prime},\sigma_{2}^{\prime}$
over $A$.

Finally, in the last part of this proof we introduce some new sets of parameters, define the points $a^{\prime},b^{\prime},c^{\prime}$ generic in $H\left(R\right)$, and prove some interalgebraicity between
them and the points $a,b,c$.

Let $\sigma, b_1, c_1$ be as fixed after the proof of Lemma \ref{L:3.2*}, which are all in $R$.

\begin{claim}\label{C:ClaimProbl}
There is a tuple $\sigma_{1}$ from $R$ such that $\textrm{qftp}\left(\sigma_{1}/A\right)=\textrm{qftp}\left(\sigma/A\right)$,
$\ensuremath{\sigma_{1}\forkindep[A]\left\{ \sigma,b_{1},c_{1}\right\} }$,
and $b$ and $a$ are group-generic of $G$ over $\textrm{acl}\left(A\sigma_{1}\right)\cap R$
and $\left(\textrm{acl}\left(A\sigma_{1}\right)\cap R\right)\cup\left\{ b\right\} $,
respectively.
\end{claim}

\begin{proof}
Since $r=\textrm{qftp}\left(\sigma/A\right)$ is stationary and
every definable set in $\mathcal{R}$ has generic points in $R$,
then there is $\widetilde{\sigma_{1}}$ a tuple from $R$ such that
$\textrm{qftp}\left(\widetilde{\sigma_{1}}/A\right)=\textrm{qftp}\left(\sigma/A\right)$
and $\widetilde{\sigma_{1}}\forkindep[A]\left\{ \sigma,b_{1},c_{1}\right\} $.
Then, by Corollary \ref{C:SupL2}(v), there is $\sigma_{1}\subseteq R$ such that $\textrm{tp}\left(\widetilde{\sigma_{1}}/A\right)=\textrm{tp}\left(\sigma_{1}/A\right)$, $b$
and $a$ are group-generic of $G$ over $A\sigma_{1}$ and $A\sigma_{1}b$,
respectively. We will see that $\sigma_{1}$ satisfies the same properties
of $\widetilde{\sigma_{1}}$.

First, since $\textrm{tp}\left(\widetilde{\sigma_{1}}/A\right)=\textrm{tp}\left(\sigma_{1}/A\right)$ and $\textrm{qftp}\left(\widetilde{\sigma_{1}}/A\right)=\textrm{qftp}\left(\sigma/A\right)$, so $\textrm{qftp}\left(\sigma_{1}/A\right)=\textrm{qftp}\left(\sigma/A\right)$. Second, from the construction throughout the proof of \cite[Proposition 3.1]{HP1}, we have that $\left\{ \sigma,b_{1},c_{1}\right\} $ and $\left\{ a,b\right\} $ are interalgebraic over $A$. Then, $\sigma_{1}\forkindep[A]\left\{ \sigma,b_{1},c_{1}\right\} $ if
and only if $\ensuremath{\sigma_{1}\forkindep[A]\left\{ a,b\right\} }$. And note that since $b\forkindep[A]\sigma_{1}$ and $a\forkindep[Ab]\sigma_{1}$, then $\sigma_{1}\forkindep[A]\left\{ a,b\right\}$.

By Remark \ref{R:ggACL}, if $b$ and $a$ are group-generic of
$G$ over $A\sigma_{1}$ and $A\sigma_{1}b$, respectively, then $b$
and $a$ are group-generic of $G$ over $\textrm{acl}\left(A,\sigma_{1}\right)\cap R$
and $\left(\textrm{acl}\left(A,\sigma_{1}\right)\cap R\right)\cup\left\{ b\right\} $,
respectively. This completes the proof of the claim.
\end{proof}

Let $\sigma_{1}$ be as given by Claim \ref{C:ClaimProbl}. Then $\upsilon\left(\sigma_{1},c_{1}\right)\models q_{1}$, is in
$R$, and $\upsilon\left(\sigma_{1},c_{1}\right)\forkindep[A]\sigma_{1}$.
Let $c_{2}=\upsilon\left(\sigma_{1},c_{1}\right)$. Also, we get $\xi\left(\sigma_{1},\sigma\right)\models s$, $\xi\left(\sigma_{1},\sigma\right)\forkindep[A]\sigma_{1}$,
and is in $H\left(R\right)$. Let $\tau=\xi\left(\sigma_{1},\sigma\right)$,
and $A_{1}=\textrm{acl}\left(A,\sigma_{1}\right)\cap R$. Then so far we have that:

\begin{enumerate}[(i)]

\item $b$ and $a$ are group-generic of $G$ over $A_{1}$ and $A_{1}b$,
respectively,
\item $\textrm{acl}\left(A_{1},a\right)=\textrm{acl}\left(A_{1},\tau\right)$, $\textrm{acl}\left(A_{1},c\right)=\textrm{acl}\left(A_{1},c_{2}\right)$, and
\item $\tau\models s$, $b_{1}\models q_{1}$, and $c_{2}\models q_{1}$.

\end{enumerate}

We complete the proof of this proposition below.


Since $R$ is definably closed in $D$, for every $\tau^{\prime}\in H\left(R\right)$ and every $\beta\in X\left(R\right)$, $\Lambda\left(\tau^{\prime},\beta\right)\in X\left(R\right)$. Moreover, $H\left(R\right)$ acts on $X\left(R\right)$ by the group action $\Lambda$ restricted to $H\left(R\right)\times X\left(R\right)$,
which is definable in $R$ over $A$.

Let us define a relation $\sim$ on $X\left(R\right)$. For $\beta_{1},\beta_{2}\in X\left(R\right)$
we say $\beta_{1}\sim\beta_{2}$ if and only if $\beta_{1}$ and $\beta_{2}$ are both in the same $H\left(R\right)$-orbit, namely if $\beta_{1}\in \Lambda\left(H\left(R\right),\beta_2\right)$. Then $\sim$
is an equivalence relation on $X\left(R\right)$ definable in $R$ over $A\subseteq A_{1}$.

Since $R$ is o-minimal and has elimination of imaginaries, \cite[Corollary 4.7]{P} implies that there are at most finitely many $\sim$-classes whose dimension equals to $\dim\left(X\left(R\right)\right)$. Therefore, for every $\beta$ generic of $X\left(R\right)$ over $A_{1}$, the equivalence class of $\beta$ under $\sim$, denoted $\left[\beta\right]$, has dimension $n$ and is a definable set in $R$ over $A\subseteq A_{1}$.


Now, recall that $b_{1}$ is generic of $X\left(R\right)$ over $A_1$ and $b_{1}\models q_{1}$,
then $\left[b_{1}\right]$ is an $n$-dimensional set definable in $\mathcal{R}$ over $A_{1}$.

Let $b_{2}^{\prime}$ be a generic element of $\left[b_{1}\right]$
over $A_{1}$. Then by Corollary \ref{C:SupL2}(v), there is a tuple $b_{2}$ from $R$ such that $\textrm{tp}\left(b_{2}/A_{1}\right)=\textrm{tp}\left(b_{2}^{\prime}/A_{1}\right)$,
and $b$ and $a$ are group-generic of $G$ over $A_{1}b_{2}$ and
$A_{1}bb_{2}$, respectively. Since $b_{2}^{\prime}\in\left[b_{1}\right]$
and $\left[b_{1}\right]$ is defined over $A_{1}$, then $b_{2}\in\left[b_{1}\right]$.
Thus, there is $\tau_{1}\in H\left(R\right)$ such that $b_{2}=\Lambda\left(\tau_{1}^{-1},b_{1}\right)$.

Since $a$ is also generic of $G$ over $A_{1}bb_{2}$, then $a\forkindep[A_{1}]\left\{ b,b_{2}\right\}$;
thus $b_{2}\forkindep[A_{1},b]a$. Also, as $b$ is also generic of
$G$ over $A_{1}b_{2}$, $b_{2}\forkindep[A_{1}]b$. Therefore, $b_{2}\forkindep[A_{1}]\left\{ a,b\right\} $,
thus $b_{2}$ is generic of $X\left(R\right)$ over $A_{1}ab$.

Now, since $\textrm{acl}\left(A_{1},a\right)=\textrm{acl}\left(A_{1},\tau\right)$
and $\textrm{acl}\left(A_{1},b\right)=\textrm{acl}\left(A_{1},b_{1}\right)$,
then $b_{2}\forkindep[A_{1}]\left\{ \tau,b_{1}\right\} $.

\begin{claim}\label{C:}

\begin{enumerate}[(i)]

\item $\tau_{1}$ is generic of $H\left(R\right)$ over $A_{1}\tau b_{1}$,
and

\item $\tau_{1}$ is generic of $H\left(R\right)$ over $A_{1}\tau b_{2}$.

\item $\tau$ is generic of $H\left(R\right)$ over $A_{1}\tau_{1} b_{2}$.

\end{enumerate}

\end{claim}

\begin{proof}

(i) Note that

\begin{align*}
\dim\left(\tau_{1},b_{2}/A_{1},\tau,b_{1}\right) & =  \dim\left(b_{2}/A_{1},\tau,b_{1}\right)+\dim\left(\tau_{1}/A_{1},\tau,b_{1},b_{2}\right)\\
 & =  n+\dim\left(\tau_{1}/A_{1},\tau,b_{1},b_{2}\right).
\end{align*}

Also, $\dim\left(\tau_{1},b_{2}/A_{1},\tau,b_{1}\right)=\dim\left(\tau_{1}/A_{1},\tau,b_{1}\right)$
since $b_{2}=\Lambda\left(\tau_{1}^{-1},b_{1}\right)$. Therefore,
$\dim\left(\tau_{1}/A_{1},\tau,b_{1}\right)=n$.

(ii) First, observe that as $b_{2}\in\textrm{acl}\left(A_{1},\tau_{1},b_{1}\right)$,
then
\begin{align*}
\dim\left(\tau_{1},b_{2},b_{1}/A_{1},\tau\right) & =  \dim\left(\tau_{1},b_{1}/A_{1},\tau\right)\\
 & =  \dim\left(\tau_{1}/A_{1},\tau,b_{1}\right)+\dim\left(b_{1}/A_{1},\tau\right)\\
 & =  2n.
\end{align*}

Also,
\begin{align*}
\dim\left(\tau_{1},b_{2},b_{1}/A_{1},\tau\right) & =  \dim\left(b_{2}/A_{1},\tau\right)+\dim\left(\tau_{1}/A_{1},\tau,b_{2}\right)\\
 & =  n+\dim\left(\tau_{1}/A_{1},\tau,b_{2}\right).
\end{align*}

Hence, $\dim\left(\tau_{1}/A_{1},\tau,b_{2}\right)=n$.

(iii) By (ii), $\tau_{1}\forkindep[A_{1},b_{2}]\tau$, and since $b_{2}\forkindep[A_{1}]\tau$,
then $\tau\forkindep[A_{1}]\left\{ b_{2},\tau_{1}\right\}$. This finishes this proof.

\end{proof}

\begin{claim}\label{C:}

\begin{enumerate}[(i)]

\item $\textrm{acl}\left(A_{1},b_{2},a\right)=\textrm{acl}\left(A_{1},b_{2},\tau\right)$,

\item $\textrm{acl}\left(A_{1},b_{2},b\right)=\textrm{acl}\left(A_{1},b_{2},\tau_{1}\right)$,
and

\item $\textrm{acl}\left(A_{1},b_{2},c\right)=\textrm{acl}\left(A_{1},b_{2},\tau\cdot\tau_{1}\right)$.

\end{enumerate}

\end{claim}

\begin{proof}

(i) It follows from $\textrm{acl}\left(A_{1},a\right)=\textrm{acl}\left(A_{1},\tau\right)$.

(ii) First, we will see that $\tau_{1}\in\textrm{acl}\left(A_{1},b_{2},b_{1}\right)$.
\begin{align*}
\dim\left(\tau_{1},b_{2}/A_{1},b_{1}\right) & =  \dim\left(b_{2}/A_{1},b_{1}\right)+\dim\left(\tau_{1}/A_{1},b_{2},b_{1}\right)\\
 & =  n+\dim\left(\tau_{1}/A_{1},b_{2},b_{1}\right).
\end{align*}

Also,
\begin{align*}
\dim\left(\tau_{1},b_{2}/A_{1},b_{1}\right) & =  \dim\left(\tau_{1}/A_{1},b_{1}\right)+\dim\left(b_{2}/A_{1},b_{1},\tau_{1}\right)\\
 & =  n.
\end{align*}
Then, $\tau_{1}\in\textrm{acl}\left(A_{1},b_{2},b_{1}\right)$, and
since $b$ and $b_{1}$ are interalgebraic over $A_{1}$, so $\textrm{acl}\left(A_{1},b_{2},\tau_{1}\right)\subseteq\textrm{acl}\left(A_{1},b_{2},b\right)$.

Additionally, $b_{1}=\Lambda\left(\tau_{1},b_{2}\right)$, then $b_{1}\in\textrm{acl}\left(A_{1},b_{2},\tau_{1}\right)$. So $\textrm{acl}\left(A_{1},b_{2},b\right)\subseteq\textrm{acl}\left(A_{1},b_{2},\tau_{1}\right)$.
This completes (ii).

(iii) First, we will see that $\Lambda\left(\tau\cdot\tau_{1},b_{2}\right)=c_{2}$.
From the properties of the action and the maps $\nu,\mu$, and $\xi$,
we have
\begin{align*}
\Lambda\left(\tau\cdot\tau_{1},b_{2}\right) & =  \Lambda\left(\tau\cdot\tau_{1},\Lambda\left(\tau_{1}^{-1},b_{1}\right)\right)\\
 & =  \Lambda\left(\tau,\Lambda\left(\tau_{1}\cdot\tau_{1}^{-1},b_{1}\right)\right)\\
 & =  \Lambda\left(\tau,b_{1}\right),
\end{align*}
\begin{align*}
c_{2} & =  \nu\left(\sigma_{1},c_{1}\right)\\
 & =  \nu\left(\sigma_{1},\mu\left(\sigma,b_{1}\right)\right)\\
 & =  \Lambda\left(\xi\left(\sigma_{1},\sigma\right),b_{1}\right)\\
 & =  \Lambda\left(\tau,b_{1}\right).
\end{align*}
Then, $\Lambda\left(\tau\cdot\tau_{1},b_{2}\right)=c_{2}$.

We will see that $\tau\cdot\tau_{1}\in\textrm{acl}\left(A_{1},b_{2},c_{2}\right)$.
By Fact \ref{F:abisGen}, since $\tau$ is generic of $H\left(R\right)$
over $A_{1} \tau_{1} b_{2}$, then $\tau\cdot\tau_{1}$ is generic
of $H\left(R\right)$ over $A_{1} \tau_{1} b_{2}$.

Now, since $b_{2}\forkindep[A_{1}]\left\{ a,b\right\} $ and $\textrm{acl}\left(A_{1},c\right)=\textrm{acl}\left(A_{1},c_{2}\right)$,
$c_{2}\forkindep[A_{1}]b_{2}$, and thus we get

\begin{align*}
\dim\left(\tau\cdot\tau_{1},c_{2}/A_{1},b_{2}\right) & =  \dim\left(c_{2}/A_{1},b_{2}\right)+\dim\left(\tau\cdot\tau_{1}/A_{1},b_{2},c_{2}\right)\\
 & =  n+\dim\left(\tau\cdot\tau_{1}/A_{1},b_{2},c_{2}\right).
\end{align*}

Also,
\begin{align*}
\dim\left(\tau\cdot\tau_{1},c_{2}/A_{1},b_{2}\right) & =  \dim\left(\tau\cdot\tau_{1}/A_{1},b_{2}\right)+\dim\left(c_{2}/A_{1},b_{2},\tau\cdot\tau_{1}\right)\\
 & =  n.
\end{align*}

Thus, $\tau\cdot\tau_{1}\in\textrm{acl}\left(A_{1},b_{2},c_{2}\right)$,
and thus $\textrm{acl}\left(A_{1},b_{2},\tau\cdot\tau_{1}\right)\subseteq\textrm{acl}\left(A_{1},b_{2},c\right)$.
Finally, as $c_{2}=\Lambda\left(\tau\cdot\tau_{1},b_{2}\right)$ and
$\textrm{acl}\left(A_{1},c\right)=\textrm{acl}\left(A_{1},c_{2}\right)$,
then $\textrm{acl}\left(A_{1},b_{2},c\right)\subseteq\textrm{acl}\left(A_{1},b_{2},\tau\cdot\tau_{1}\right)$,
this ends this proof.

\end{proof}

Let $A_{2}=\textrm{acl}\left(A_{1},b_{2}\right)\cap R$, $a^{\prime}=\tau$, $b^{\prime}=\tau_{1}$, and $c^{\prime}=a^{\prime}\cdot b^{\prime}$ the product  of $a^{\prime}$ by $b^{\prime}$ in $H$. So far we have proved that:

\begin{enumerate}[(i)]

\item By Remark \ref{R:ggACL}, $b$ and $a$ are group-generic of
$G$ over $A_{2}$ and $A_{2}b$, respectively,

\item $a^{\prime}$ and $b^{\prime}$ are generic of $H\left(R\right)$
over $A_{2}$ and $a^{\prime}\forkindep[A_{2}]b^{\prime}$,

\item $\textrm{acl}\left(A_{2},a\right)=\textrm{acl}\left(A_{2},a^{\prime}\right)$,
$\textrm{acl}\left(A_{2},b\right)=\textrm{acl}\left(A_{2},b^{\prime}\right)$,
and $\textrm{acl}\left(A_{2},c\right)=\textrm{acl}\left(A_{2},c^{\prime}\right)$.

\end{enumerate}

Finally, let $A$ any finite subset of $A_{2}$ over which $G$ and $H$ are defined with the obtained properties. This concludes the proof of Proposition \ref{P:3.1*}.

\end{proof}

\section{A local homomorphism with generic domain between a semialgebraically compact semialgebraic group over R and the R-points of an R-algebraic group}\label{S:5}

In this section we prove the main theorem of this paper:

\begin{thm}\label{T:TA*}
Let $G$ be a definably compact definably connected group definable in $\mathcal{R}$. Then there are
\begin{enumerate}[(i)]
    \item a connected $R$-algebraic group $H$ such that $\dim\left(G\right)=\dim\left(H\left(R\right)\right)=\dim\left(H\right)$,
    \item a definable $X\subseteq G$ such that $G^{00}\subseteq X$,
    \item a definable homeomorphism $\phi:X\subseteq G\rightarrow \phi\left(X\right)\subseteq H\left(R\right)$ such that $\phi$ and $\phi^{-1}$ are local homomorphisms.
\end{enumerate}
\end{thm}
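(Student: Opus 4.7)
The plan is to apply Proposition \ref{P:3.1*} to extract a canonical $A$-definable partial map $\phi_{0}\colon G\to H(R)$ realizing the configuration simultaneously, promote the homomorphism property to a generic $A$-definable subset of $G$ via Corollary \ref{C:(a,b)GpGeneric} and Lemma \ref{L:ExistdeCaja}, use compact domination of $G$ by $G/G^{00}$ to place $G^{00}$ inside the domain, and finally invoke Remark \ref{R:R2} to secure the two-sided local homomorphism property.

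First I apply Proposition \ref{P:3.1*} to obtain a finite $A\subseteq R$, a connected $R$-algebraic group $H$ with $\dim H=\dim H(R)=\dim G$, group-generic $a,b\in G$ with $c=ab$, and generic $a',b'\in H(R)$ with $c'=a'b'$, pairwise interalgebraic over $A$. The interalgebraicities produced there are realized uniformly through the action $\Lambda$ of $H$ on the $X$-torsor from \cite[Prop.~1.8.1]{HP1} and a fixed base point $b_{2}\in X(R)$; this yields a single $A$-definable partial function $\phi_{0}\colon G\to H(R)$ with $\phi_{0}(a)=a'$, $\phi_{0}(b)=b'$, and consequently $\phi_{0}(c)=c'=a'b'$ simultaneously, together with a symmetric $\psi_{0}\colon H(R)\to G$ inverting $\phi_{0}$ on a common generic refinement---injectivity follows by restricting to that refinement. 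Since $R$ is definably closed in $D$ and $\textrm{acl}^{\mathcal{R}}=\textrm{dcl}^{\mathcal{R}}$ by o-minimality, both $\phi_{0}$ and $\psi_{0}$ are $\mathcal{R}$-definable.

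Next I introduce the homomorphism locus
\[
Z=\{(x,y)\in G\times G:\phi_{0}(x),\phi_{0}(y),\phi_{0}(xy)\textrm{ are defined and }\phi_{0}(xy)=\phi_{0}(x)\phi_{0}(y)\}.
\]
The pair $(a,b)$ lies in $Z$ (as $\phi_{0}(ab)=\phi_{0}(c)=c'=a'b'$), and by Corollary \ref{C:(a,b)GpGeneric} it is group-generic in $G\times G$ over $A$, so $Z$ is generic in $G\times G$. Compact domination of $G\times G$ by $G/G^{00}\times G/G^{00}$ together with \cite[Prop.~2.1]{Berar09}, as used inside the proof of Lemma \ref{L:ExistdeCaja}, provides $\overline{g}\in G\times G$ with $G^{00}\times G^{00}\subseteq Z\cdot\overline{g}$; after translating $\phi_{0}$ in its source and target accordingly---the fact that $G/G^{00}$ is abelian (because $G$ is definably compact definably connected) permitting a consistent such adjustment---one may arrange $G^{00}\times G^{00}\subseteq Z$. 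Lemma \ref{L:ExistdeCaja} then produces definable generic $A^{*},B^{*}\subseteq G$ with $G^{00}\subseteq A^{*},B^{*}$ and $A^{*}\times B^{*}\subseteq Z$; the set $Y_{0}=A^{*}\cap B^{*}$ is a generic definable subset of $G$ containing $G^{00}$ on which $\phi_{0}$ is an injective local homomorphism.

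Finally, since $G^{00}\cdot G^{00}=G^{00}\subseteq Y_{0}$, saturation yields definable $X_{1},X_{2}\supseteq G^{00}$ with $X_{1}\cdot X_{2}\subseteq Y_{0}$; taking $X=X_{1}\cap X_{2}\cap Y_{0}$ and $\phi=\phi_{0}|_{X}$ gives $G^{00}\subseteq X$ and $X\cdot X\subseteq Y_{0}$, so Remark \ref{R:R2} implies that $\phi^{-1}$ is a local homomorphism on $\phi(X)$. Continuity of $\phi$ and $\phi^{-1}$, hence the homeomorphism property, follows from a final restriction of $X$ to the large definable subset on which definable maps in an o-minimal expansion of a real closed field are continuous, and the dimension equalities in (i) are already recorded in the proof of Proposition \ref{P:3.1*}. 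The principal obstacle is the translation step producing $G^{00}\times G^{00}\subseteq Z$: one must track carefully how a translation in $G\times G$ interacts with the local homomorphism property of $\phi_{0}$ and choose a matching translation in $H(R)$ so that the two cancel, which is precisely where the abelianness of $G/G^{00}$ is used.
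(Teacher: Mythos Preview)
Your proposal has two genuine gaps, both at the places you yourself flag as delicate.

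\textbf{A single $\phi_{0}$ does not come for free.} Proposition \ref{P:3.1*} only gives you three separate interalgebraicities $\textrm{acl}(Aa)=\textrm{acl}(Aa')$, $\textrm{acl}(Ab)=\textrm{acl}(Ab')$, $\textrm{acl}(Ac)=\textrm{acl}(Ac')$, and if you trace through its proof you see they are witnessed by genuinely different routes (through $\sigma$, through $b_{1}$ and the torsor action, through $c_{2}$). So what you actually get, via $\textrm{acl}=\textrm{dcl}$, is three $A$-definable homeomorphisms $f,g,h$ on neighborhoods of $a,b,c$ respectively; there is no reason these should agree on overlaps or patch into one $\phi_{0}$. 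The paper therefore works with the set
\[
Z=\{(x,y):x\in U,\ y\in V,\ xy\in W,\ f(x)\cdot g(y)=h(xy)\},
\]
and it is this $Z$, not a homomorphism locus of a single map, that contains the group-generic pair $(a,b)$.

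\textbf{The translation step does not preserve the homomorphism locus.} Even if you had a single $\phi_{0}$, translating $Z$ by $(g_{1},g_{2})$ turns the condition into $\phi_{0}(xg_{1}\cdot yg_{2})=\phi_{0}(xg_{1})\phi_{0}(yg_{2})$, which is not the homomorphism locus of any translate of $\phi_{0}$ unless $G$ itself is abelian; abelianness of $G/G^{00}$ is not enough, since $g_{1},g_{2}$ live in $G$. The paper avoids this completely: after extracting a generic box $A\times B\subseteq Z$ it performs a one-sided manipulation to find $A'\subseteq A$ generic and $(s,t)\in A'\times B$ with $sG^{00}\subseteq A'$ and $(A')^{-1}st\subseteq B$, then \emph{defines} $\phi$ on $X=(A')^{-1}s$ by $\phi(x^{-1}s)=f(x)^{-1}f(s)$. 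That $\phi$ and $\phi^{-1}$ are local homomorphisms is then a direct cocycle-style computation using $f(x)g(y)=h(xy)$ on the box; no translation of $Z$ is needed, and $G^{00}\subseteq X$ is automatic from $sG^{00}\subseteq A'$.
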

\begin{proof}
Denote by $D$ the algebraic closure of $R$. By Proposition \ref{P:3.1*}, there are a finite subset $A\subseteq R$ over which
$G$ is defined, a $D$-definably connected group $H$ quantifier-free $A$-definable
in $D$, a group-generic point $b$ of $G$ over $A$, a group-generic point $a$ of $G$ over $Ab$, thus $c=a\cdot b$ is also group-generic of $G$ over $Ab$ (this by Claim \ref{C:abIsGpGen}), as well as points $a^{\prime},b^{\prime},c^{\prime}= a^{\prime}\cdot b^{\prime}\in H\left(R\right)$ generic in $H\left(R\right)$ over $A$ with the properties given there. Let $k$ be the subfield generated by $A$.

As every $D$-definably connected group definable over $k$ in the algebraic
closed field $D$ is definably isomorphic over $k$ to a connected
$k$-algebraic group (\cite{BousII,LouChunkThm}), we may assume that
$H$ is such algebraic group. Moreover, by the conditions of the points $a,b,c$, $a^{\prime},b^{\prime},c^{\prime}$
of Proposition \ref{P:3.1*}, the dimension of $H$ as algebraic
group is equal to the o-minimal dimensions $\dim\left(G\right)$ and $\dim\left(H\left(R\right)\right)$.

Since $a$ and $a^{\prime}$ are interalgebraic over $k$ in $R$
and $R$ is o-minimal, $a$ and $a^{\prime}$ are interdefinable over
$k$ in $R$, and similarly for $b,b^{\prime}$ and $c,c^{\prime}$. From now on, we work in $R$ and by definable we will mean $R$-definable.

By \cite[Lemma 4.8(i)]{HP1} (which holds for $R$ instead of $\mathbb{R}$), there are open $k$-definable neighbourhoods $U,V$ and $W$ in $G$
of $a,b,c$, respectively, and $U^{\prime}$, $V^{\prime}$, $W^{\prime}$
in $H\left(R\right)$ of $a^{\prime},b^{\prime},c^{\prime}$, respectively,  and
$k$-definable functions $f,g$, and $h$ such that
$f\left(a\right)=a^{\prime}$ and $f$ is a definable homeomorphism
between $U$ and $U^{\prime}$, $g\left(b\right)=b^{\prime}$ and
$g$ is a definable homeomorphism between $V$ and $V^{\prime}$, and $h\left(c\right)=c^{\prime}$
and $h$ is a definable homeomorphism between $W$ and $W^{\prime}$.

Let \[Z=\left\{ \left(x,y\right)\in G\times G:x\in U,y\in V,x\cdot y\in W,f\left(x\right)\cdot g\left(y\right)=h\left(x\cdot y\right)\right\}.\]

Since $b$ is group-generic in $G$ over $k$ and $a$ is group-generic
in $G$ over $kb$, Corollary \ref{C:(a,b)GpGeneric} yields $\left(a,b\right)$
is group-generic in $G\times G$ over $k$. Thus, as $Z$ is $k$-definable
and $\left(a,b\right)\in Z$, then $Z$ is generic in $G\times G$.

By Lemma \ref{L:ExistdeCaja}, there are definable sets $A,B$ generic
in $G$ such that $A\times B\subseteq Z$.

\begin{claim}

Let $X,Y$ definable sets generic in $G$. Then there is $g\in G$ such that $X\cap\left(Y\cdot g^{-1}\right)$ is generic in $G$ and that $\left(X\cap\left(Y\cdot g^{-1}\right)\right)\cdot g\subseteq Y$.

\end{claim}
\begin{proof}
By genericity of $X$ in $G$, there are $g_{1},\ldots,g_{k}\in G$
such that $G=\bigcup_{i\leq k}X\cdot g_{i}$, hence $Y=\bigcup_{i\leq k}\left(X\cdot g_{i}\right)\cap Y=\bigcup_{i\leq k}\left(X\cap\left(Y\cdot g_{i}^{-1}\right)\right)\cdot g_{i}$.
Since $Y$ is generic, there is $i\leq k$ such that $\left(X\cap\left(Y\cdot g_{i}^{-1}\right)\right)\cdot g_{i}$ is generic in $G$. Thus, with $g=g_{i}$ we get the desired result.
\end{proof}

Then by the above claim applied to $A^{-1}$ and $B$, there is $g\in G$ such that $\left(A^{-1}\cap\left(B\cdot g^{-1}\right)\right)\cdot g$ is generic and contained in $B$. Then if $A^{\prime}=A\cap\left(g\cdot B^{-1}\right)$, so $A^{\prime}$ is generic in $G$ and $\left(A^{\prime}\right)^{-1}\cdot g\subseteq B$. By \cite[Proposition 2.1]{Berar09}, there is $s\in A^{\prime}$ such that $s\cdot G^{00}\subseteq A^{\prime}$.
Since $A^{\prime}=A\cap\left(g\cdot B^{-1}\right)$, $s=g\cdot b^{-1}$
for some $b\in B$. Let $t=b$. Note that $s\cdot t\in A^{\prime}\cdot B\subseteq W$. So far we have shown that there are generic
sets $A^{\prime}$ and $B$ in $G$ such that

\begin{enumerate}

\item[(i)] $A^{\prime}\times B\subseteq Z$ , and

\item[(ii)] There is $\left(s,t\right)\in A^{\prime}\times B$ such
that $s\cdot G^{00}\subseteq A^{\prime}$ and $\left(A^{\prime}\right)^{-1}\cdot\left(s\cdot t\right)\subseteq B$.

\end{enumerate}

Let $X=\left(A^{\prime}\right)^{-1}\cdot s$, then $G^{00}\subseteq X$. Finally, we will define the local homomorphism.

\begin{prop}\label{P:4.9}

The definable homeomorphism
\[
\phi:X=\left(A^{\prime}\right)^{-1}\cdot s\rightarrow\left(f(A^{\prime})\right)^{-1}\cdot f\left(s\right)
\]
defined by $\phi\left(x^{-1}\cdot s\right)=f\left(x\right)^{-1}\cdot f\left(s\right)$
for $x\in A^{\prime}$, and its inverse
\[
\phi^{-1}:\left(f(A^{\prime})\right)^{-1}\cdot f\left(s\right)\rightarrow X,
\]
 which is given by $\phi^{-1}\left(y^{-1}\cdot f\left(s\right)\right)=\left(f^{-1}\left(y\right)\right)^{-1}\cdot s$
for $y\in f(A^{\prime})$, are local homomorphisms between $G$ and
$H\left(R\right)^{0}$.

\end{prop}
\begin{proof}
First, note the following.

\begin{claim}\label{C:}

\begin{enumerate}[(i)]

\item If $\left(x_{1}^{-1}\cdot s\right)\cdot\left(x_{2}^{-1}\cdot s\right)\in\left(A^{\prime}\right)^{-1}\cdot s$,
then

\begin{align*}
\phi\left(\left(x_{1}^{-1}\cdot s\right)\cdot\left(x_{2}^{-1}\cdot s\right)\right) & =  \phi\left(x_{1}^{-1}\cdot s\right)\cdot\phi\left(x_{2}^{-1}\cdot s\right)\Leftrightarrow\\
f\left(\left(x_{1}^{-1}\cdot s\cdot x_{2}^{-1}\right)^{-1}\right)^{-1}\cdot f\left(s\right) & =  f\left(x_{1}\right)^{-1}\cdot f\left(s\right)\cdot f\left(x_{2}\right)^{-1}\cdot f\left(s\right)\Leftrightarrow\\
f\left(x_{1}\right)f\left(\left(x_{1}^{-1}\cdot s\cdot x_{2}^{-1}\right)^{-1}\right)^{-1}f\left(x_{2}\right) & =  f\left(s\right).
\end{align*}

\item If $\left(f\left(x_{1}\right)^{-1}\cdot f\left(s\right)\right)\cdot\left(f\left(x_{2}\right)^{-1}\cdot f\left(s\right)\right)\in f\left(Y\right)^{-1}\cdot f\left(s\right)$,
then

$\phi^{-1}\left(\left(f\left(x_{1}\right)^{-1}\cdot f\left(s\right)\right)\cdot\left(f\left(x_{2}\right)^{-1}\cdot f\left(s\right)\right)\right)=\phi^{-1}\left(f\left(x_{1}\right)^{-1}\cdot f\left(s\right)\right)\cdot\phi^{-1}\left(f\left(x_{2}\right)^{-1}\cdot f\left(s\right)\right)\Leftrightarrow$

\begin{align*}
\left(f^{-1}\left(\left(f\left(x_{1}\right)^{-1}\cdot f\left(s\right)\cdot f\left(x_{2}\right)^{-1}\right)^{-1}\right)\right)^{-1}\cdot s & =  x_{1}^{-1}\cdot s\cdot x_{2}^{-1}\cdot s\Leftrightarrow\\
x_{1}\cdot\left(f^{-1}\left(\left(f\left(x_{1}\right)^{-1}\cdot f\left(s\right)\cdot f\left(x_{2}\right)^{-1}\right)^{-1}\right)\right)^{-1}\cdot x_{2} & =  s.
\end{align*}

\end{enumerate}

\end{claim}

\begin{proof}

It follows directly from the definitions of $\phi$ and $\phi^{-1}$.

\end{proof}

We will show in Claim \ref{C:theproof} that each of $\phi$ and $\phi^{-1}$
satisfy one of the equivalent conditions formulated above. To prove
Claim \ref{C:theproof} we will use the next technical fact.

From now on, let $s^{\prime}=f\left(s\right)$ and $t^{\prime}=g\left(t\right)$.

\begin{claim}\label{C:Propchiinv}
\begin{enumerate}
\item[(i)] For every $y_{1}\in f\left(A^{\prime}\right)$ and every $y_{2}\in g\left(B\right)$, $f^{-1}\left(y_{1}\right)\cdot g^{-1}\left(y_{2}\right)=h^{-1}\left(y_{1}\cdot y_{2}\right)$.
\item[(ii)] $\left(f\left(A^{\prime}\right)\right)^{-1}\cdot\left(s^{\prime}\cdot t^{\prime}\right)\subseteq g\left(B\right)$. \end{enumerate} \end{claim}
\begin{proof}
(i) Let $y_{1}\in f\left(A^{\prime}\right)$, so there is $x_{1}\in A^{\prime}$ such that $f\left(x_{1}\right)=y_{1}$. Let $y_{2}\in g\left(B\right)$, so there is $x_{2}\in B$ such that $g\left(x_{2}\right)=y_{2}$. Since $A^{\prime}\times B\subseteq Z$, then $y_{1}\cdot y_{2}=f\left(x_{1}\right)\cdot g\left(x_{2}\right)=h\left(x_{1}\cdot x_{2}\right)$; therefore, $h^{-1}\left(y_{1}\cdot y_{2}\right)=x_{1}\cdot x_{2}=f^{-1}\left(y_{1}\right)\cdot g^{-1}\left(y_{2}\right)$.

(ii) Let $x\in A^{\prime}$, then $x^{-1}\cdot\left(s\cdot t\right)\in B$, so $g\left(x^{-1}\cdot\left(s\cdot t\right)\right)=\left(f\left(x\right)\right)^{-1}\cdot h\left(s\cdot t\right)=\left(f\left(x\right)\right)^{-1}\cdot\left(s^{\prime}\cdot t^{\prime}\right)$. Hence, $\left(f\left(A^{\prime}\right)\right)^{-1}\cdot\left(s^{\prime}\cdot t^{\prime}\right)\subseteq g\left(B\right)$.
\end{proof}

\begin{claim}\label{C:theproof}

Let $x_{1},x_{2}\in A^{\prime}$.

\begin{enumerate}[(i)]

\item Let $z^{-1}=x_{1}^{-1}\cdot s\cdot x_{2}^{-1}$. If $z^{-1}\in\left(A^{\prime}\right)^{-1}$,
then $f\left(x_{1}\right)f\left(z\right)^{-1}f\left(x_{2}\right)=f\left(s\right)=s^{\prime}$.

\item Let $w^{-1}=f\left(x_{1}\right)^{-1}\cdot s^{\prime}\cdot f\left(x_{2}\right)^{-1}$.
If $w^{-1}\in f\left(A^{\prime}\right)^{-1}$, then $x_{1}\cdot\left(f^{-1}\left(w\right)\right)^{-1}\cdot x_{2}=s=f^{-1}\left(s^{\prime}\right)$.

\end{enumerate}

\end{claim}

\begin{proof}

For the following recall that $\left(s,t\right)\in A^{\prime}\times B$,
and $A^{\prime}\times B\subseteq Z$, so for every $\left(x,y\right)\in A^{\prime}\times B$,
$f\left(x\right)\cdot g\left(y\right)=h\left(x\cdot y\right)$.

(i) \begin{align*}
f\left(s\right)\cdot g\left(t\right) & =  h\left(s\cdot t\right)\\
 & =  h\left(\left(x_{1}\cdot z^{-1}\cdot x_{2}\right)\cdot t\right)\\
 & =  h\left(x_{1}\cdot\left(z^{-1}\cdot x_{2}\cdot t\right)\right),\:\textrm{since}\:\ensuremath{z^{-1}\cdot x_{2}\cdot t=x_{1}^{-1}\cdot s\cdot t\in\left(A^{\prime}\right)^{-1}\cdot s\cdot t\subseteq B},\\
 & =  f\left(x_{1}\right)\cdot g\left(z^{-1}\cdot x_{2}\cdot t\right)\\
 & =  f\left(x_{1}\right)\cdot f\left(z\right)^{-1}\cdot h\left(x_{2}\cdot t\right)\\
 & =  f\left(x_{1}\right)\cdot f\left(z\right)^{-1}\cdot f\left(x_{2}\right)\cdot g\left(t\right).
\end{align*}

After cancelling $g\left(t\right)$, the desired conclusion is obtained.

(ii) By Claim \ref{C:Propchiinv}, we have the next equations.
\begin{align*}
f^{-1}\left(s^{\prime}\right)\cdot g^{-1}\left(t^{\prime}\right) & =  h^{-1}\left(s^{\prime}\cdot t^{\prime}\right)\\
 & =  h^{-1}\left(\left(f\left(x_{1}\right)\cdot w^{-1}\cdot f\left(x_{2}\right)\right)\cdot t^{\prime}\right)\\
 & =  h^{-1}\left(f\left(x_{1}\right)\cdot\left(w^{-1}\cdot f\left(x_{2}\right)\cdot t^{\prime}\right)\right),\:\textrm{since}\:\ensuremath{w^{-1}\cdot f\left(x_{2}\right)\cdot t^{\prime}=f\left(x_{1}\right){}^{-1}\cdot s^{\prime}\cdot t^{\prime}\in g\left(B\right)},\\
 & =  f^{-1}\left(f\left(x_{1}\right)\right)\cdot g^{-1}\left(w^{-1}\cdot f\left(x_{2}\right)\cdot t^{\prime}\right)\\
 & =  x_{1}\cdot\left(f^{-1}\left(w\right)\right)^{-1}\cdot h^{-1}\left(f\left(x_{2}\right)\cdot t^{\prime}\right)\\
 & =  x_{1}\cdot\left(f^{-1}\left(w\right)\right)^{-1}\cdot x_{2}\cdot g^{-1}\left(t^{\prime}\right).
\end{align*}

After cancelling $g^{-1}\left(t^{\prime}\right)$, we conclude the claim.
\end{proof}
This finishes the proof of Proposition \ref{P:4.9}.
\end{proof}
Theorem \ref{T:TA*} is proved.
\end{proof}

\begin{claim}\label{C:G00Berar}
Let $G$ be a definably connected definably compact group definable in a sufficiently saturated o-minimal expansion of a real closed field. Let $X\subseteq G$ definable with $G^{00}\subseteq X$. Then there are definable sets $X_1,X_2 \subseteq G$ such that $X_1$ is definably simply connected, $X_2$ is definably connected and symmetric, and $G^{00}\subseteq X_1\subseteq X_2\subseteq X$.
\end{claim}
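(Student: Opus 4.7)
The plan is to refine $X$ in three successive stages, producing in turn a definable open neighborhood of $G^{00}$ inside $X$, then a symmetric definably connected $X_2$, and finally a definably simply connected $X_1$. The key input beyond saturation is the topological analysis of $G^{00}$ from \cite{Berar09}, which provides that $G^{00}$ admits a filtered neighborhood basis of definable open sets with increasingly strong topological properties. For the first stage, since $G^{00}$ is type-definable and contained in $X$, the partial type defining $G^{00}$ together with the formula $x\notin X$ is inconsistent, so by saturation some definable open neighborhood $U$ from this basis is already contained in $X$.

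Second, to produce $X_2$, I would refine inside $U$ to obtain a definable open, symmetric, and definably connected neighborhood of $G^{00}$. Symmetry is enforced by replacing $U$ with $U\cap U^{-1}$, which still contains $G^{00}=(G^{00})^{-1}$. For definable connectedness I would take the definably connected component of $U\cap U^{-1}$ containing the identity $e$. The resulting set $X_2$ is open and definable (definably connected components of open definable sets in o-minimal structures are open and definable), symmetric because inversion is a homeomorphism fixing $e$ and permuting components of $U\cap U^{-1}$, and definably connected by construction. The inclusion $G^{00}\subseteq X_2$ uses that $G^{00}$ is topologically connected as a subspace of $G$: this follows because $G$ is definably connected and $G^{00}$ is the identity fibre of the continuous projection onto the connected compact group $G/G^{00}$ (cf.\ \cite{HPePiNIP,HPP2011}); otherwise $G^{00}$ would split nontrivially under the clopen partition of $U\cap U^{-1}$ into its components.

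Finally, to produce $X_1$, I would apply the same saturation argument inside $X_2$ but now invoke the stronger property from \cite{Berar09} that $G^{00}$ admits definable open neighborhoods that are in addition definably simply connected, for instance by selecting a neighborhood built from top-dimensional cells of an o-minimal triangulation adapted to $G^{00}$. Intersecting such a neighborhood with $X_2$ if necessary yields $X_1$. I expect the main obstacle to be this last step: the first two refinements are routine manipulations in the topological group, but the existence of a definably simply connected definable neighborhood of $G^{00}$ is not elementary and requires the nontrivial topological input on $G^{00}$ from \cite{Berar09}, which is precisely the reference suggested by the label of the claim.
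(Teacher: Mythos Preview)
Your route differs from the paper's and has two real gaps.

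First, in Step~2 your argument that $G^{00}$ lies in the identity component of $U\cap U^{-1}$ is not valid: being the fibre of a continuous surjection onto a connected group says nothing about the connectedness of that fibre. The conclusion may well be true, but proving it amounts to showing that $G^{00}$ has a basis of definably connected neighborhoods, which is essentially part of what the claim asserts rather than an available input. Second, in Step~3 you invoke from \cite{Berar09} the existence of definably simply connected neighborhoods of $G^{00}$, but that is not what \cite{Berar09} provides; the only input actually used from that paper is Proposition~2.1, which says that every \emph{generic} definable subset of $G$ contains a coset $gG^{00}$. Your phrase ``intersecting such a neighborhood with $X_2$ if necessary'' is also dangerous, since intersection does not preserve simple connectedness.

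The paper's proof avoids both issues by reversing the order of construction. Write $G^{00}=\bigcap_i X_i=\bigcap_i X_iX_i^{-1}$ and pick $i$ with $X_iX_i^{-1}\subseteq X$ by saturation. Cell-decompose $X_i$; since $X_i$ is generic, some cell $C$ is generic, and cells are automatically definably simply connected. Now \cite[Prop.~2.1]{Berar09} gives $g$ with $G^{00}\subseteq Cg$; since $e\in Cg$ one has $g\in C^{-1}$, hence $Cg\subseteq CC^{-1}\subseteq X_iX_i^{-1}\subseteq X$. Then $X_1=Cg$ is definably simply connected (a translate of a cell) and $X_2=CC^{-1}$ is symmetric and definably connected (a continuous image of $C\times C$). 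The key idea you are missing is: produce the simply connected set \emph{first} via cell decomposition, then use \cite[Prop.~2.1]{Berar09} to move $G^{00}$ inside it, rather than trying to improve the topology of an already-given neighborhood of $G^{00}$ after the fact.
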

\begin{proof}
By saturation, $G^{00}=\bigcap_{i\in\mathbb{N}}X_{i}=\bigcap_{i\in\mathbb{N}}X_{i}\cdot X_{i}^{-1}$,
and since $G^{00}\subseteq X$, then there is $i\in\mathbb{N}$ such
that $G^{00}\subseteq X_{i}\cdot X_{i}^{-1}\subseteq X$. By the Cell decomposition Theorem (\cite{LVD}), $X_{i}$
is a finite union of definably simply connected cells, thus one of them has
to be generic, call it $C$. By \cite[Proposition 2.1]{Berar09},
there is $g\in G$ such that $G^{00}\subseteq C\cdot g\subseteq C\cdot C^{-1}\subseteq X_{i}\cdot X_{i}^{-1}\subseteq X$.
Finally, let $X_1=C\cdot g$ and $X_2=C\cdot C^{-1}$.
\end{proof}

\begin{rem}\label{R:TA*simm-connected}
By Claim \ref{C:G00Berar}, the definable generic set $X$ of Theorem \ref{T:TA*} can be taken either definably connected, symmetric, and $G^{00}\subseteq X$, or definably simply connected and $G^{00}\subseteq X$.
\end{rem}

Let $X$ be the set in the conclusion of Theorem \ref{T:TA*}. As
$G^{00}\subseteq X$, then there is $X^{\prime}\subseteq X$ symmetric
such that $G^{00}\subseteq X^{\prime}\subseteq X^{\prime}\cdot X^{\prime}\subseteq X$.
Then the next result holds in $\mathcal{R}$:

\begin{cor}\label{C:ThmA*for Transfer}

Let $G$ be a definably compact definably connected group definable in $\mathcal{R}$. Then there are
\begin{enumerate}[(i)]
\item a connected $R$-algebraic group $H$ such that $\dim\left(G\right)=\dim\left(H\left(R\right)\right)=\dim\left(H\right)$,
\item definable sets $X^{\prime},X\subseteq G$ such that $X^{\prime}$ is a symmetric neighborhood of the identity of $G$ and generic in $G$, and $X^{\prime}\cdot X^{\prime}\subseteq X$,
\item a definable homeomorphism $\phi:X\subseteq G\rightarrow \phi\left(X\right)\subseteq H\left(R\right)$ such that $\phi$ and $\phi^{-1}$ are local homomorphisms.
\end{enumerate}

\end{cor}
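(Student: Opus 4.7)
The plan is to derive this corollary directly from Theorem \ref{T:TA*} by a saturation argument followed by an application of Claim \ref{C:G00Berar}. I first apply Theorem \ref{T:TA*} to obtain the connected $R$-algebraic group $H$ with $\dim(G) = \dim(H(R)) = \dim(H)$, a definable set $X \subseteq G$ with $G^{00} \subseteq X$, and a definable homeomorphism $\phi\colon X \to \phi(X) \subseteq H(R)$ such that both $\phi$ and $\phi^{-1}$ are local homomorphisms. Parts (i) and (iii) of the conclusion follow at once, and it remains only to construct the auxiliary symmetric set $X'$ of part (ii).

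To build $X'$, I exploit that $G^{00}$ is a type-definable subgroup, so $G^{00} \cdot G^{00} = G^{00} \subseteq X$. Writing $G^{00} = \bigcap_{i \in I} Y_{i}$ as a directed intersection of definable sets closed under finite intersection, a saturation argument applied to the product map forces some index $i_{0}$ with $Y_{i_{0}} \cdot Y_{i_{0}} \subseteq X$: otherwise the partial type asserting, for each $i$, the existence of a pair in $Y_{i} \times Y_{i}$ with product outside $X$ would be finitely satisfiable, hence realised by saturation, contradicting $G^{00} \cdot G^{00} \subseteq X$. Replacing $Y_{i_{0}}$ by $Y := Y_{i_{0}} \cap Y_{i_{0}}^{-1}$ produces a symmetric definable set still containing $G^{00}$ (since $G^{00}$ is symmetric) and still satisfying $Y \cdot Y \subseteq X$.

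I then apply Claim \ref{C:G00Berar} to $Y$, which yields a symmetric definable set $X' = C \cdot C^{-1}$ for some definably simply connected generic cell $C \subseteq Y$ with $G^{00} \subseteq X' \subseteq Y$. Since $C$ is a generic, hence maximal-dimensional and therefore open, cell, the product $X' = C \cdot C^{-1}$ is a union of translates of the open set $C$ and so is itself open; hence $X'$ is an open neighborhood of the identity. Genericity of $X'$ in $G$ follows from the standard compact-domination fact that any definable set containing $G^{00}$ is generic (and can also be extracted from Fact \ref{F:T3.7} applied to $G \setminus X'$). The required inclusion $X' \cdot X' \subseteq Y \cdot Y \subseteq X$ then completes (ii).

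Essentially all the work was performed inside Theorem \ref{T:TA*}, so the present corollary is a routine strengthening and there is no serious obstacle. The one step deserving explicit care, and hence the closest thing to a main technical point, is the saturation argument producing $Y$ with $Y \cdot Y \subseteq X$: this requires the directed system $\{Y_{i}\}$ defining $G^{00}$ to be closed under finite intersections, and it crucially exploits the group-theoretic identity $G^{00} \cdot G^{00} = G^{00}$ valid for the type-definable subgroup.
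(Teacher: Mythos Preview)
Your proof is correct and follows essentially the same approach as the paper: the paper's argument is the single sentence preceding the corollary, which takes the $X$ from Theorem \ref{T:TA*} and asserts that since $G^{00}\subseteq X$ there is a symmetric definable $X'$ with $G^{00}\subseteq X'\subseteq X'\cdot X'\subseteq X$. Your saturation argument is precisely the justification the paper leaves implicit, and your detour through Claim \ref{C:G00Berar} to secure that $X'$ is an open neighbourhood is a legitimate (if slightly heavier than necessary) way to handle a point the paper passes over; one could equally take the directed family $\{Y_i\}$ defining $G^{00}$ to consist of open symmetric sets from the start.
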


By transferring Corollary \ref{C:ThmA*for Transfer} from $\mathcal{R}$ to any real closed field, we have that Corollary \ref{C:ThmA*for Transfer} holds in any real closed field, not necessarily sufficiently saturated.

\section*{Acknowledgements}
I would like to express my gratitude to the Universidad de los Andes,
Colombia and the University of Haifa, Israel for supporting and funding my research as well as for their stimulating hospitality. I would also like to thank warmly to my advisors: Alf Onshuus and Kobi
Peterzil for their support, generous ideas, and kindness during this work.

I want to express my gratitude to Anand Pillay for suggesting to Kobi Peterzil the problem discussed in this work: the study of semialgebraic groups over a real closed field. Also, thanks to the Israel-US Binational Science Foundation for their support.

The main results of this paper have been presented on the winter of 2016 at the Logic Seminar of the Institut Camille Jordan, Université Claude Bernard - Lyon 1 (Lyon) and at the Oberseminar Modelltheorie of the Universit{\"a}t Konstanz (Konstanz).

\nocite{ErraEdCov,BaEdErrata,PetStein99,HP2,TentZie,Hodges}
\bibliographystyle{plain}
\bibliography{IP}

\end{document}